\begin{document}

\title{Stability of the Rhomboidal Symmetric-Mass Orbit}

\author{Lennard Bakker \and Skyler Simmons}


\institute{Lennard Bakker \at
              275 TMCB \\
              Brigham Young University \\
              Provo, UT, 84602 \\
              \email{bakker@math.byu.edu}           
           \and
           Skyler Simmons \at
              275 TMCB \\
              Brigham Young University \\
              Provo, UT, 84602 \\
              \email{xinkaisen@gmail.com}           
}

\date{Received: date / Accepted: date}

\maketitle

\begin{abstract}
We study the rhomboidal symmetric-mass 4-body problem in both a two-degree-of-freedom and a four-degree-of-freedom setting.  Under suitable changes of variables in both settings, isolated binary collisions at the origin are regularizable.  Linear stability analysis is performed in both settings.  For the two-degree-of-freedom setting, linear stability is established for a wide interval of mass ratios.  A Poincar\'{e} section analysis is also performed, showing stability.  In the four-degree-of-freedom setting, linear stability fails except for a very small interval of mass ratios.
\keywords{$n$-body problem \and binary collision \and regularization \and linear stability \and rhomboidal problem}
\end{abstract}

\section{Introduction}

In the \textit{Principia Mathematica}, published in 1687, Newton outlined many governing principles of the motion of physical objects.  Combining the laws $F = ma$ and the law of universal gravitation gave a relation that could describe the motions of bodies in space.  The resulting equations helped to explain many of the behaviors that astronomers of the time were aware of (most notably Kepler).  Mathematically, the study of determining the motion of $n$ point masses in space is known as the Newtonian $n$-body problem.  Notationally, if $\{q_1, q_2, ..., q_n\}$ represent the positions of the bodies in $\mathbb{R}^k$ ($k = 1, 2,$ or $3$) with masses $\{m_1, m_2, ..., m_n\}$ respectively, then their motion is governed by the system of differential equations
\begin{equation}
m_i \ddot{q}_i  = \sum_{i \neq j} \frac{m_i m_j (q_j - q_i)}{|q_i - q_j|^3},
\label{standardmotion}
\end{equation}
where the dot represents the derivative with respect to time.  Despite hundreds of years of study and the relatively recent development of computer ODE solvers, many open questions about the $n$-body problem remain.  \\

One aspect of the $n$-body problem that has been getting much attention of late are orbits involving collision singularities.  A \textit{collision singularity} occurs when $q_i = q_j$ for some $i \neq j$.  In the equations governing motion, this results in a zero denominator in one or more terms in the sum.  Under certain conditions, these collisions can be \textit{regularized} and continued past collision. \\

Schubart \cite{bibSchubart} was one of the first to study periodic orbits with collisions.  He was able to find a collinear three-body equal-mass orbit where the central body alternated between collisions with the outer two.  This was further extended to the case of arbitrary masses numerically by H\'enon \cite{bibHenon} in 1977.  Analytic existence of the equal-outer-mass orbit was established independently by Venturelli \cite{bibVenturelli} and Mockel \cite{bibMoeckel}, both in 2008.  Shibiyama \cite{bibShib1} recently demonstrated the existence of the arbitrary-mass version.  The study of linear stability of Schubart's orbit was performed by Hietarinta and Mikkola \cite{bibHM1} in 1993.\\

Sweatman found a Schubart-like collinear four-body symmetric orbit in 2002 \cite{bibSweatman1}, and later studied its linear stability \cite{bibSweatman2}.  This orbit features simultaneous binary collisions between two outer pairs of bodies followed by an interior collision between the two central bodies.  Analytic existence of this orbit was given by Ouyang and Yan in \cite{bibYan3}.  \\

Planar orbits with singularities have also been studied.  A planar four-body orbit featuring simultaneous binary collisions was described in \cite{bibOYS}.  The orbit was shown to be linearly stable in \cite{bibBORSY}.  It was later shown that this orbit could be extended to symmetric masses in \cite{bibBOYS1} (see also \cite{bibBOYS2}), and linear stability for this extension was shown for an interval of certain mass ratios in \cite{bibBMS}.  \\

Analytic existence of large families of orbits with singularities was recently proven by Shibayama in \cite{bibShib1} and Martinez in \cite{bibMartinez}.  Each orbit can be reduced to two position and two momentum variables (the so-called two-degree-of-freedom problem).  One orbit of note in this family is the rhomboidal four-body orbit, which features two pairs of bodies on the $x$- and $y$-axes.  The pairs collide at the origin in an alternating fashion.  This orbit was shown to exist analytically in multiple independent papers (by Yan in \cite{bibYan1} and Martinez in \cite{bibMartinez} for equal masses, \cite{bibShib1} for symmetric masses).  Additionally, Yan showed that for equal masses, the orbit is linearly stable.\\

In a separate study of the rhomboidal four-body problem with unequal masses, Waldvogel \cite{bibWaldvogel1} notes that ``sufficiently simple systems may bear the chance of permitting theoretical advances,'' and identifies the rhomboidal configuration as one such system.  Indeed, much of the analysis performed in this paper is far simpler than that of \cite{bibBMS}.  \\

The remainder of this paper is divided into two principal sections.  Section \ref{sec2df} concerns the orbit in the two-degree-of-freedom (2DF) setting.  In Section \ref{sec2dper} we give the notation and mathematical description of the orbit.  Section \ref{LinStab1} outlines some basic theory for linear stability.  Section \ref{Numerics} describes some preliminary numerical calculations that are needed to study the orbit.  Section \ref{Poincare} gives a Poincar\'{e} section analysis of the orbit in the 2DF setting. \\

In Section \ref{sec4df} we further the study of the orbit in the four-degree-of-freedom (4DF) setting.  Section \ref{sec4dfper} sets up the additional mathematical notation for the new setting.  Section \ref{syms} describes the symmetries of the orbit, which will be needed for the stability calculations.  Section \ref{stabcalc} reviews how symmetries of a periodic orbit can be used to simplify the linear stability calculation.  Section \ref{apriori} shows how the stability calculation can be reduced to the calculation of three entries of a particular matrix.  Section \ref{Results} gives the results and implications of the remaining calculation. \\

\section{The Rhomboidal Two-Degree-of-Freedom Symmetric-Mass Problem}\label{sec2df}

\subsection{The Periodic Orbit}\label{sec2dper}

We consider the planar Newtonian $4$-body problem with bodies located at 
\begin{equation}
q_1 = (x_1, 0)\text{, }q_2 = (0,x_2)\text{, } q_3 = -q_1\text{, } q_4 = -q_2
\label{config}
\end{equation}
and masses $1$, $m$, $1$, $m$ respectively for some $m \in (0,1]$.  The bodies travel along the $x$ and $y$ axes, forming the vertices of a rhombus at all times away from collision.  Binary collisions occur between the bodies with equal masses at the origin.  For the periodic orbit, the non-colliding bodies have zero momentum at collision time.  Following collision, the colliding bodies eject along the appropriate coordinate axis, and the remaining two bodies travel toward collision at the origin, where a similar (zero momentum of non-colliding bodies) behavior occurs.  (See Figure \ref{figTheOrbit}.)\\

\begin{figure}
\begin{center}
\includegraphics[scale=.5]{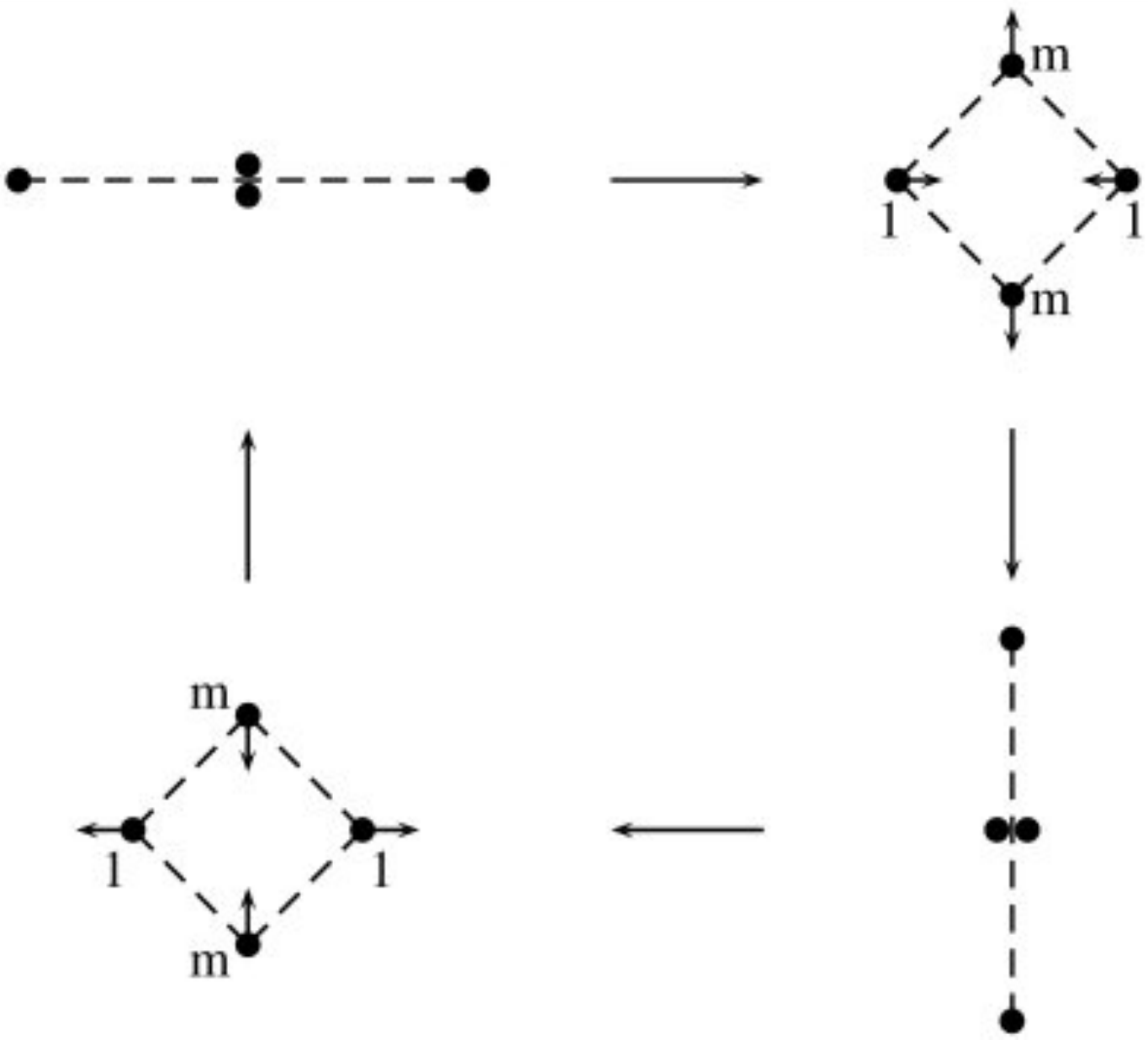}
\caption{The rhomboidal four-body orbit.}
\label{figTheOrbit}
\end{center}
\end{figure}

Analytic existence of this orbit was shown in \cite{bibYan1} in the $m = 1$ case, and for $m \in (0,1]$ in \cite{bibShib1}.  The Hamiltonian for the system is given by
$$H = \frac{1}{4}w_1^2 + \frac{1}{4m}w_2^2- \frac{1}{2x_1} - \frac{m^2}{2x_2} - \frac{4m}{\sqrt{x_1^2 + x_2^2}}$$
where $w_1 = 2\dot{x}_1, w_2 = 2m\dot{x}_2$.  We can continue the orbits past collision via a regularization under which binary collision corresponds to an elastic bounce.  To regularize these collisions, we use a Levi-Civita-type change of coordinates.  Using the canonical transformations $Q_i^2 = x_i$, $P_i = 2Q_i w_i$ for $i = 1, 2$, with a change of time satisfying $dt/ds = x_1x_2$, the regularized Hamiltonian in extended phase space is given by
\begin{align*}
\Gamma &= \Gamma(Q_1, Q_2, P_1, P_2, E) \\
&=\frac{dt}{ds}(H-E) \\
&=\frac{1}{16}Q_2^2 P_1^2 + \frac{1}{16}\frac{Q_1^2 P_2^2}{m} - \frac{1}{2}Q_1^2m^2 - \frac{1}{2}Q_2^2 - \frac{4 Q_1^2 Q_2^2 m}{\sqrt{Q_1^4 + Q_2^4}} - Q_1^2 Q_2^2 E.
\end{align*}
This yields the equations of motion
\begin{align}
Q_1' &= \frac{1}{8}Q_2^2 P_1, \label{regMotion1}\\
Q_2' &= \frac{1}{8m} Q_1^2 P_2
\end{align}
\begin{align}
P_1' &= -\frac{1}{8m}Q_1 P_2^2 + Q_1 m^2 + \frac{8Q_1 Q_2^2 m}{(Q_1^4 + Q_2^4)^{1/2}} - \frac{8Q_1^5 Q_2^2 m}{(Q_1^4 + Q_2^4)^{3/2}} + 2Q_1Q_2^2 E, \\
P_2' &= -\frac{1}{8}Q_2 P_1^2 + Q_2 + \frac{8Q_1^2 Q_2 m}{(Q_1^4 + Q_2^4)^{1/2}} - \frac{8Q_1^2 Q_2^5 m}{(Q_1^4 + Q_2^4)^{3/2}} + 2Q_1^2 Q_2 E, \label{regMotion4}
\end{align}
where $'$ denotes the derivative with respect to the new time variable $s$. \\

At the time of collision of the two bodies on the $x$-axis, we have $Q_1 = 0$ and $P_2 = 0$.  At this time, setting $\Gamma = 0$ yields
$$Q_2^2 \left(\frac{1}{16}P_1^2 - \frac{1}{2} \right) = 0.$$
Hence, $P_1 = \pm 8^{3/2}$, with the sign being the same as the sign on $Q_2$.  Similarly, at the time of collision of the two bodies on the $y$-axis, $Q_2 = P_1 = 0$.  The condition $\Gamma = 0$ then gives
$$Q_1^2 \left(\frac{1}{16m}P_2^2 - \frac{1}{2}m^2 \right) = 0,$$
and so $P_2 = \pm (8m)^{3/2}.$ \\

Let
\begin{equation}
S = 
\begin{bmatrix}
-1 & 0 & 0 & 0 \\
0 & 1 & 0 & 0 \\
0 & 0 & 1 & 0 \\
0 & 0 & 0 & -1
\end{bmatrix}.
\label{SMatrix2df}
\end{equation}
A standard proof shows that if $\gamma(s)$ is a $T$-periodic solution to (\ref{regMotion1}) - (\ref{regMotion4}), both $-S\gamma(T/2 - s)$ and $S\gamma(T - s)$ are solutions as well.  Existence and uniqueness of solutions then imply that
$$-S\gamma(T/2 - s) = \gamma(s) = S\gamma(T-s)$$
for all $s$.  Hence the symmetry group for the rhomboidal four-body orbit is isomorphic to the Klein four group, with $S$ and $-S$ as generators.

\subsection{Linear Stability} \label{LinStab1}
Note that $\Gamma$ is a smooth function defined on $\mathbb{R}^{4} \setminus \{Q_1 = Q_2 = 0\}$.  Suppose $\gamma(s)$ is a $T$-periodic solution of the system $z' = JD\Gamma(z)$, where $' = d/ds$,
\begin{equation*}
J = 
\begin{bmatrix}
O & I \\
-I & O \\
\end{bmatrix},
\end{equation*}
and $I$ and $O$ are the $2 \times 2$ identity and zero matrices, respectively.  If $X(s)$ is the fundamental matrix solution of the linearized equations

\begin{equation}
\xi' = JD^2(\gamma(s))\xi, \quad \xi(0) = I
\label{linearized1}
\end{equation}
then the monodromy matrix is given by $X(T)$ and satisfies $X(s + T) = X(s)X(T)$ for all $s$.  Eigenvalues of the monodromy matrix are also the characteristic multipliers of $\gamma$, and therefore determine the linear stability of $\gamma$.  In particular, $\gamma$ is spectrally stable if all of its characteristic multipliers lie on the unit circle, and $\gamma$ is linearly stable if it is spectrally stable and semisimple apart from trivial eigenvalues. \\

Linear stability is typically established by numerical integration.  Some elegant techniques for simplifying the numerical work were presented by Roberts in \cite{bibRoberts1}, and will be presented in Section \ref{stabcalc}. \\

\subsection{Numerical Determination of Initial Conditions}\label{Numerics}

In order to determine the initial conditions for the rhomboidal orbit, we model each of $Q_1, Q_2, P_1$, and $P_2$ by truncated trigonometric polynomials:
\begin{align}
\tilde{Q}_1 &= \sum_{i = 0}^n a_i \sin((2i+1)s), \label{trigFirst} \\
\tilde{Q}_2 &= \sum_{i = 0}^n b_i \sin((2i+1)(s+\pi/2)), \\
\tilde{P}_1 &= \sum_{i = 0}^n c_i \sin((2i+1)(s-\pi/2)), \\
\tilde{P}_2 &= \sum_{i = 0}^n d_i \sin((2i+1)s). \label{trigLast}
\end{align}
The choice of trigonometric polynomials is natural for modeling periodic behavior.  A similar technique was carried out by Sim\'o in \cite{bibSimo1}.  The time shifts and choice of odd-only multiples of $s$ correspond to symmetries of the orbit.  In particular, for these polynomials, the time-reversing symmetries shown earlier are built-in, and the non-colliding bodies have zero net momentum at collision time.  For a fixed $n$, we numerically minimize the value of
$$\int_0^{2\pi} \left((Q_1' - \tilde{Q}'_1)^2 + (Q_2' - \tilde{Q}'_2)^2 + (P_1' - \tilde{P}'_1)^2 + (P_2' - \tilde{P}'_2)^2\right) \ ds$$
where the minimization is taken over the space of coefficients $\{a_i, b_i, c_i, d_i\}$.  We combine this with a root-finding technique to find the appropriate value of $E$ for a $2\pi$-periodic orbit, as in \cite{bibBOYS1} (see also \cite{bibBOYS2}).  Once these trigonometric polynomials are determined, we can extract the initial conditions for the orbit by evaluating $\tilde{Q}_1, \tilde{Q}_2, \tilde{P}_1, \tilde{P}_2$ at any fixed time $s \in [0,2\pi]$. \\

We obtain the initial conditions for the $2\pi$-periodic orbit for $m = 1$ by rescaling the conditions given in \cite{bibYan1}.  It is easy to check that if $\gamma(s) = (Q_1(s), Q_2(s), P_1(s), P_2(s), E)$ is a solution for (\ref{regMotion1}) - (\ref{regMotion4}), then the solution $\gamma_\epsilon(s) = (\epsilon Q_1(\epsilon s), \epsilon Q_2(\epsilon s), P_1(\epsilon s), P_2(\epsilon s), E/\epsilon^2)$ also satisfies (\ref{regMotion1}) - (\ref{regMotion4}).  Moreover, this rescaling does not change the linear stability of the orbit.  Given the initial conditions in \cite{bibYan1} and rescaling with $\epsilon \approx 1.55$, the orbit is roughly $2\pi$-periodic, which we verify by integration using the standard fourth-order Runge-Kutta-Fehlberg algorithm.  A standard curve-fitting technique can then be used to give the coefficients $\{a_i, b_i, c_i, d_i\}$ in the equations (\ref{trigFirst}) - (\ref{trigLast}).  After that, a gradual ``step-down'' technique (as in \cite{bibBOYS1}) can be used to find the initial conditions for other values of $m \in (0,1]$.  If we assume the initial conditions occur at the time the two bodies with mass 1 collide, we have $Q_1 = P_2 = 0$.  Also, as before, we know that $P_1 = -2\sqrt{2}$, so we need only to find the values of $Q_2$ and $E$.  The results of the numerical calculation are shown in Figures \ref{q2plot} - \ref{physicalplots}. \\

\begin{figure}[h]
\begin{center}
\begin{tabular}{cc}
\includegraphics[scale=.42]{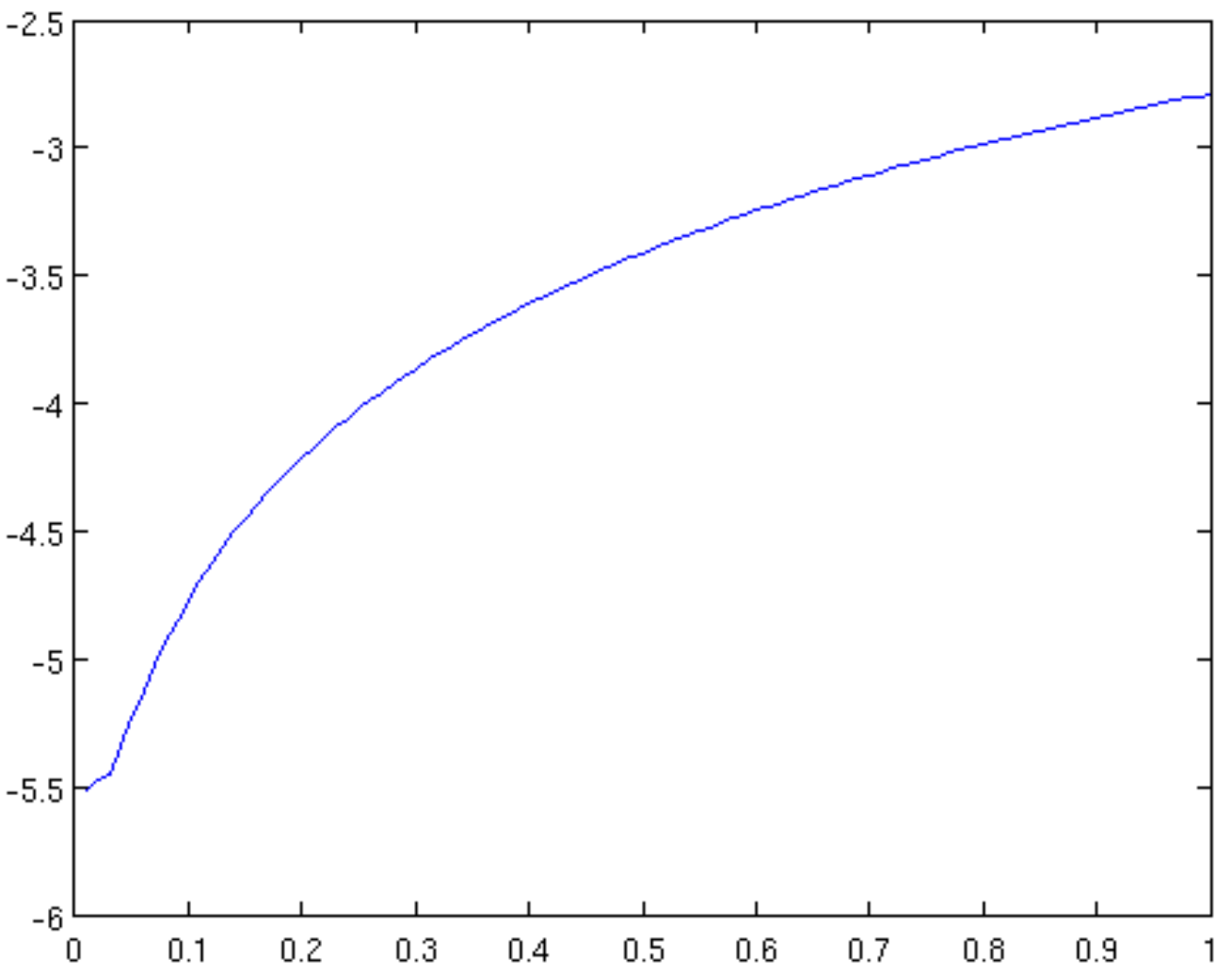} & 
\includegraphics[scale=.42]{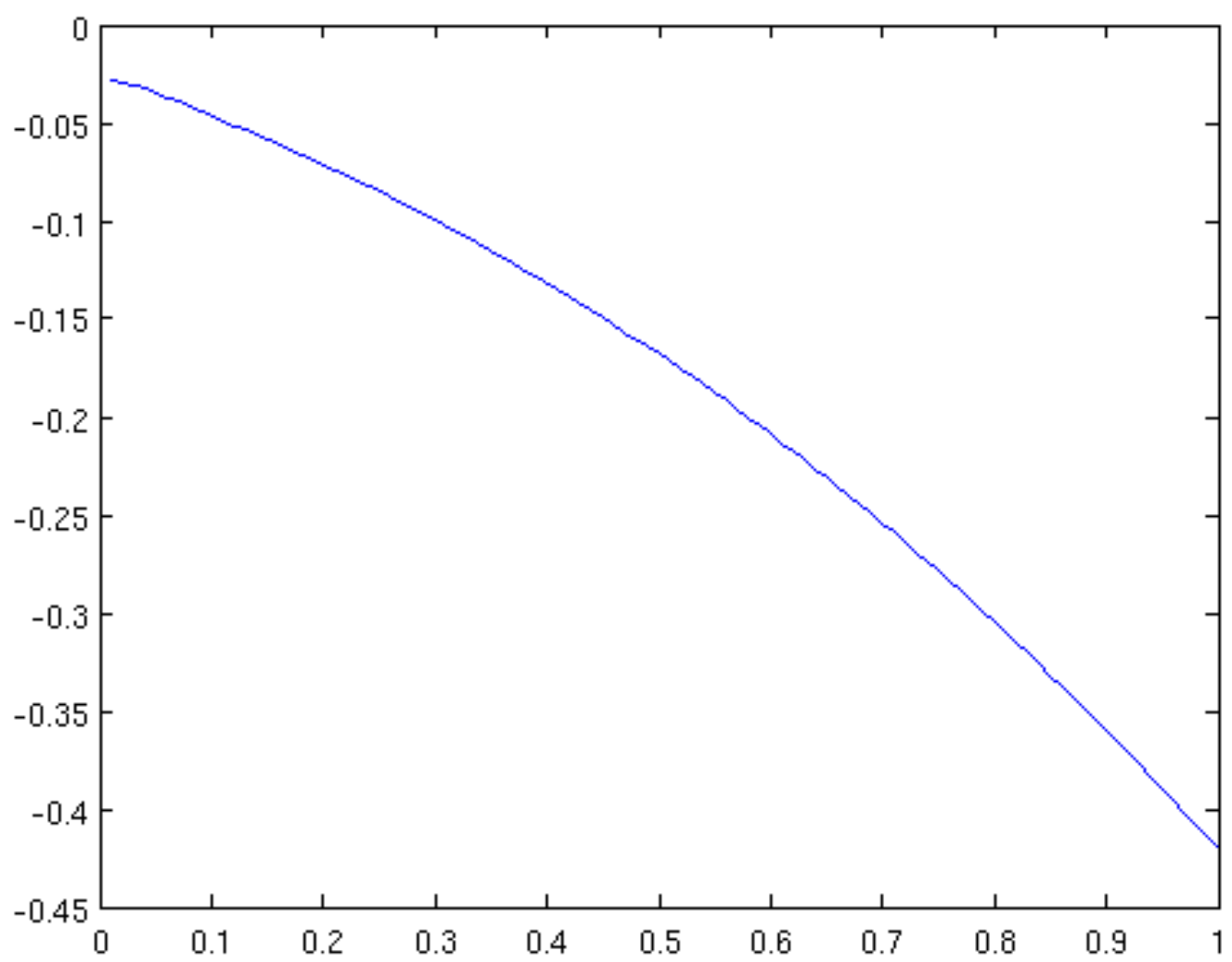} \\
\end{tabular}
\caption{The values of $Q_2$ (left) and $E$ (right) as functions of $m$.}
\label{q2plot}
\end{center}
\end{figure}

\afterpage{\clearpage
\begin{figure}[h]
\begin{center}
\includegraphics[scale=.55]{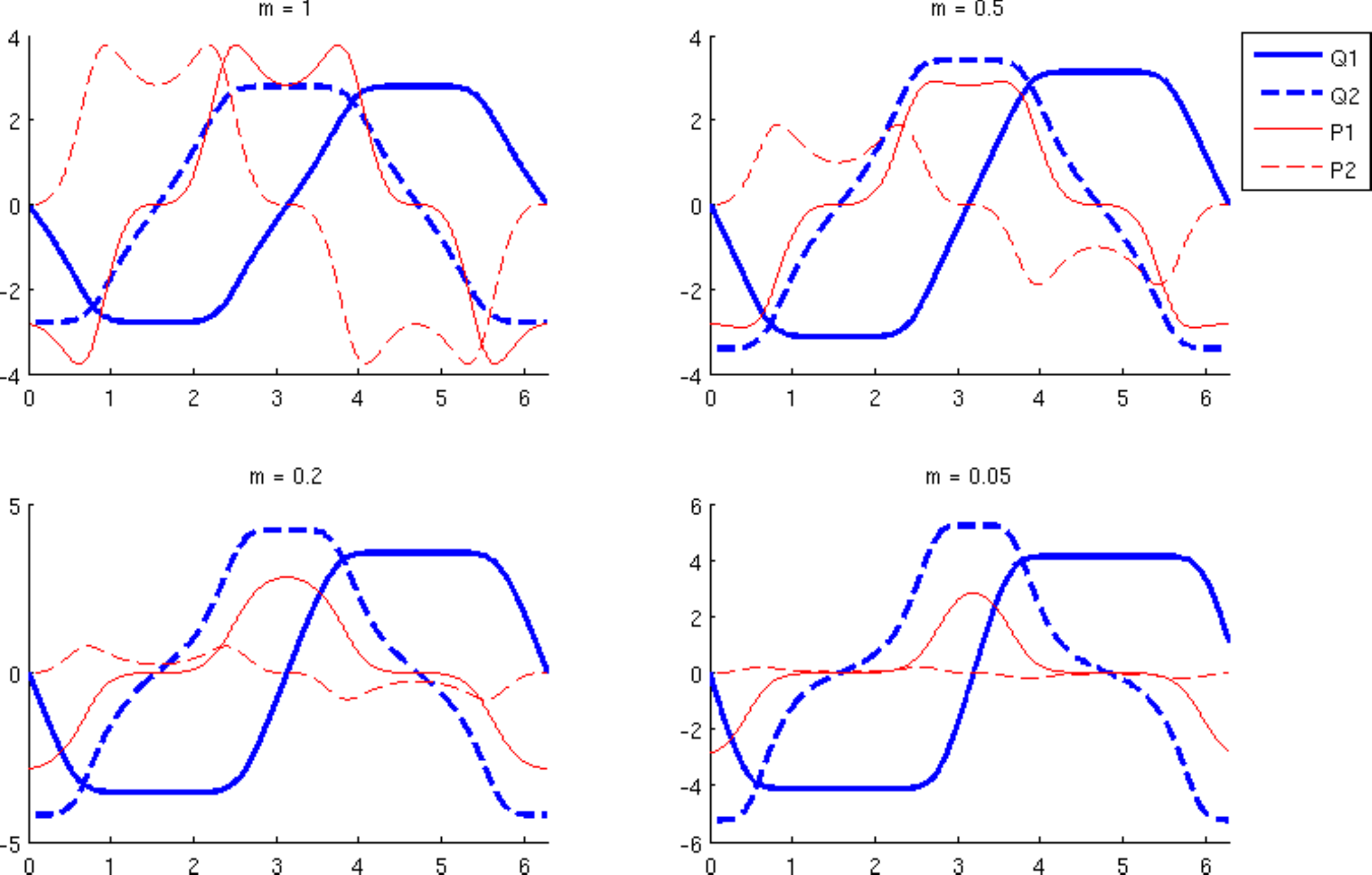}
\caption{Numerical integrations of (\ref{regMotion1}) - (\ref{regMotion4}) with initial conditions obtained from the trigonometric polynomials $\tilde{Q}_i, \tilde{P}_i$ for various masses.}
\label{regularizedplots} 
\includegraphics[scale=.55]{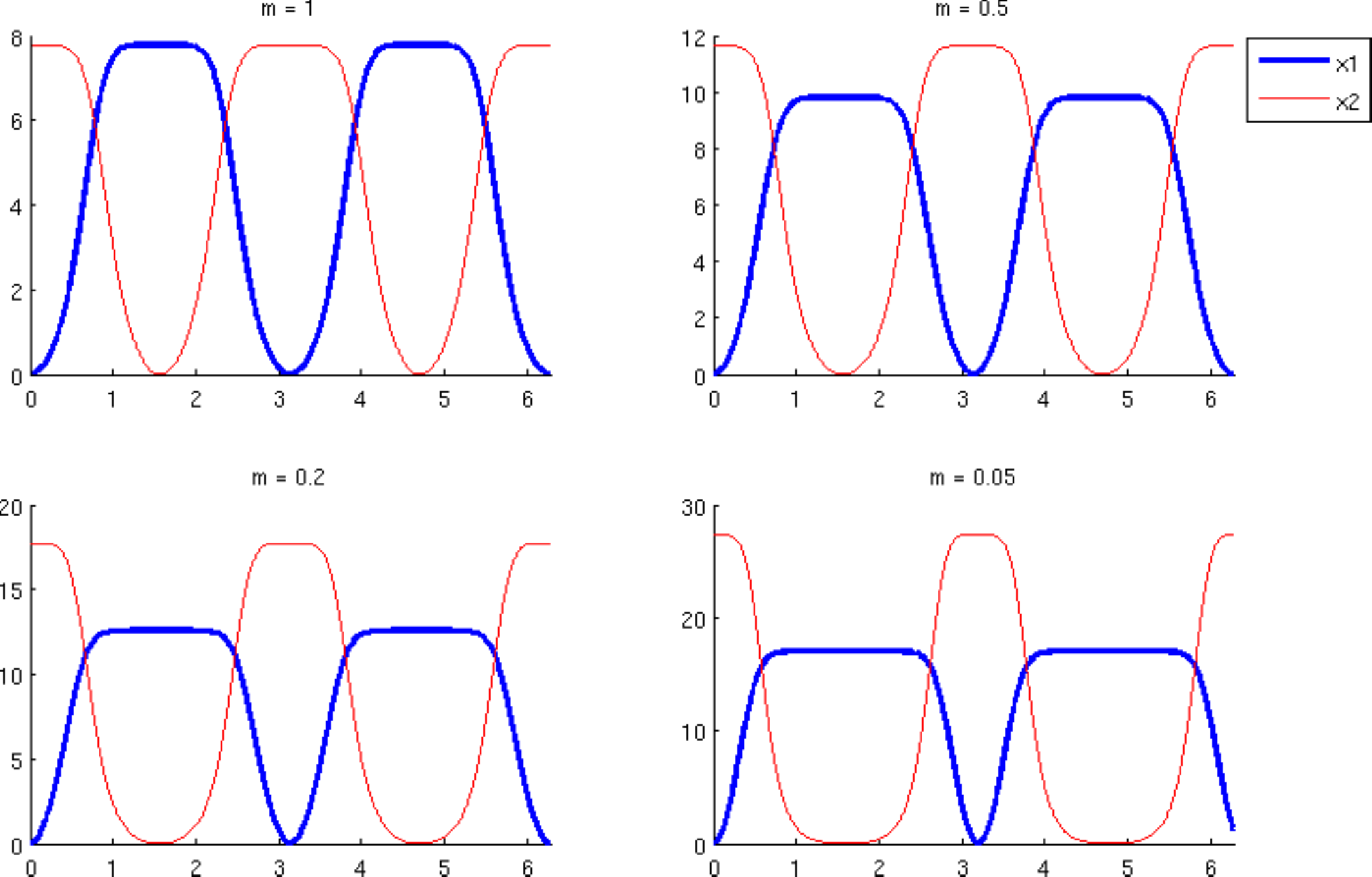}
\caption{Orbits in terms of the original physical variables $x_1$ and $x_2$ obtained by coordinate transformation of the data shown in Figure \ref{regularizedplots}.} 
\label{physicalplots}
\end{center}
\end{figure}
}

\afterpage{\clearpage}

As a result of having determined the initial conditions for the orbit, we can perform a numerical integration to determine the linear stability.  An elegant decomposition will yield the following stability result for the 2DF setting as a corollary to the stability in the 4DF setting.  As such, we postpone the proof of the following until Section \ref{Results}.
\begin{theorem}
There exists some positive number $\epsilon$ such that the 2DF symmetric-mass periodic orbit of the regularized planar rhomboidal four-body problem is linearly stable for $m \in (.01 + \epsilon, 1]$. 
\label{resulttheorem}
\end{theorem}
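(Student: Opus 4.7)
The plan is to derive Theorem \ref{resulttheorem} as a direct corollary of the 4DF stability calculation by exploiting the fact that the 2DF orbit, viewed inside the 4DF phase space, lies in a symmetry-invariant submanifold on which the linearized flow decouples. Concretely, I would first introduce coordinates adapted to the involution that fixes the rhomboidal configuration, so that the tangent space at any point of the orbit splits into a \emph{tangential} (rhomboidal-preserving) subspace and a \emph{transverse} (rhomboidal-breaking) subspace. Because the orbit itself is invariant under the involution, $D^2\Gamma$ evaluated along the orbit preserves this splitting, hence the 4DF monodromy matrix $\widehat{X}(T)$ is block-diagonal, and the tangential $4\times 4$ block coincides with the 2DF monodromy matrix $X(T)$ from (\ref{linearized1}).

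Next I would invoke the symmetry-reduction machinery of Roberts, to be developed in Section \ref{stabcalc}, together with the Klein-four symmetry generated by $S$ and $-S$ established after (\ref{SMatrix2df}). This factors $X(T)$ through the quarter-period matrix $X(T/4)$ and reduces the entire linear-stability computation to integrating the variational equations on $[0,T/4]$ together with the evaluation of a small, explicit set of matrix entries. The 2DF block of $\widehat{X}(T)$ is thereby expressed in closed form in terms of three quantities that are already computed in the course of the 4DF analysis of Section \ref{apriori}, so no additional numerical work is required: the eigenvalues of $X(T)$ can simply be read off from the output of that section.

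Having $X(T)$ in hand as a symplectic $4\times 4$ matrix with two trivial unit eigenvalues (from the Hamiltonian and energy conservation), the remaining pair of characteristic multipliers is a reciprocal pair $\{\lambda,\lambda^{-1}\}$ whose location is governed by a single trace-type stability index. I would tabulate this index as a function of $m$ over $(0,1]$ using the initial conditions constructed in Section \ref{Numerics}, verify that it lies strictly inside the stability interval $(-2,2)$ for every $m$ in $(0.01+\epsilon,1]$ for a suitable $\epsilon>0$, and check semisimplicity of the trivial multipliers. This yields spectral stability on that range, and hence linear stability in the sense defined in Section \ref{LinStab1}.

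The main obstacle will be the analysis near the left endpoint: the 4DF calculation is expected to show the transverse block losing stability for most of $(0,1]$, and one must ensure that the tangential block's stability index is bounded away from $\pm 2$ on $(0.01+\epsilon,1]$ rather than merely appearing numerically close. The buffer $\epsilon$ is precisely what insulates the conclusion from this boundary-crossing behavior, so the crux of the argument is the quantitative statement that the stability index stays uniformly within $(-2,2)$ on the claimed interval; everything else is a consequence of the decomposition and of the calculations already carried out in the 4DF section.
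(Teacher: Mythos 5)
Your proposal matches the paper's argument: the paper likewise embeds the 2DF orbit as the invariant subspace $\mathcal{A} = \{Q_2 = Q_3 = P_2 = P_3 = 0\}$ of the 4DF problem, shows the linearized flow preserves the $\mathcal{A} \oplus \mathcal{A}^\perp$ splitting (via the $\mathcal{M}_2$ block structure), applies Roberts' quarter-period reduction, and reads off 2DF stability from the single entry $e = K_{44}$ of the reduced matrix $K$, verifying numerically that it stays in $[-1,1]$ for $m \in (0.01+\epsilon, 1]$. Your trace-type index in $(-2,2)$ is just an equivalent packaging of that same scalar condition, so the two proofs are essentially identical.
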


\subsection{Poincar\'e Section Analysis}\label{Poincare}

To numerically analyze nonlinear stability, we find a suitable Poincar\'e section for the orbit.  This was done in the $m = 1$ case in \cite{bibWaldvogel1}.  Our more general Poincar\'e section is based on techniques presented in \cite{bibHM1} and \cite{bibSweatman1}.  For any value of $m$, we seek a number $\alpha$ such that
$$\frac{x_1}{x_2} = \alpha$$
is maintained throughout the orbit, with $x_1$ and $x_2$ as defined in (\ref{config}) earlier.  In other words, the value of $\alpha$ corresponds to the ratio of $x_1$ and $x_2$ in a homographic orbit where the trajectories of the bodies correspond to total collapse (or ejection from total collapse).  We find this value of $\alpha$ by solving the standard equations of motion (\ref{standardmotion}) with the substitutions $x_1 = \alpha x_2$ and $\ddot{x_1} = \alpha\ddot{x_2}$.  Doing so, we find that the required value of $\alpha$ for a given mass $m$ is a root of the 12th-degree polynomial
\begin{equation}\label{alphaeq}
(1+\alpha^2)^3(m\alpha^3-1)^2 - 64\alpha^6(1-m)^2 = 0.
\end{equation}
Notice that if the ratio $x_1 / x_2$ is constant throughout the orbit, then the ratio $x_2 / x_1$ is also constant throughout.  It can be verified by numerical integration that the roots of \ref{alphaeq} corresponding to the ratio $x_1 / x_2$ lie in the interval $[0,1]$.  This will be preferred for ease of numerical calculation.  The value of $\alpha$ as a function of $m$ is plotted in Figure \ref{alphafigure}. \\

\begin{figure}[h]
\begin{center}
\includegraphics[scale=.5]{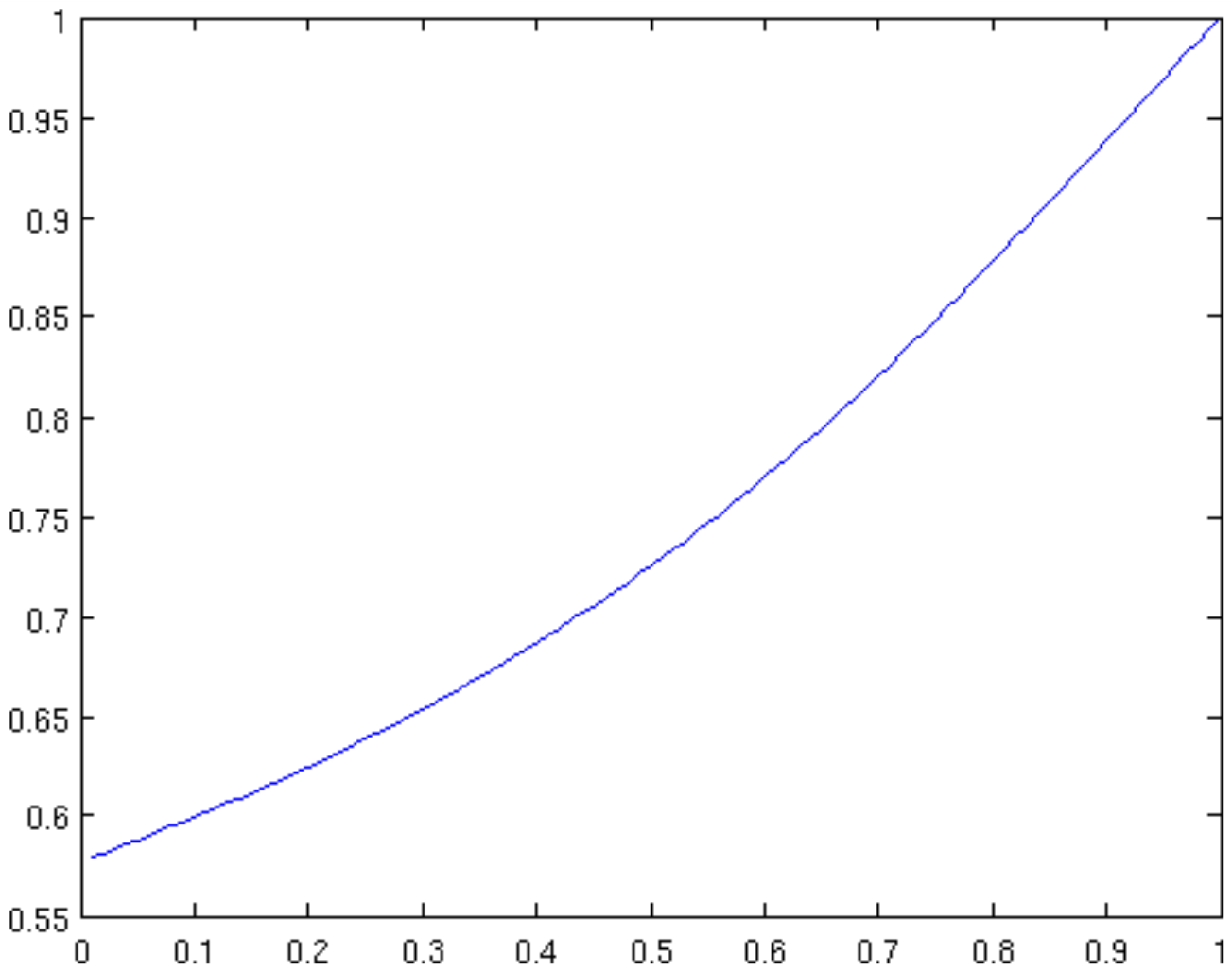}
\caption{The value of $\alpha$ as a function of $m$.}
\label{alphafigure}
\end{center}
\end{figure}

For fixed $E = -1$, we define a Poincar\'e section $\Sigma$ to be the two-dimensional surface given by $x_1 = \alpha x_2$ in the phase space defined by the variables $x_1$, $x_2$, $\dot{x}_1$, and $\dot{x}_2$.  (Note that $\dot{x}_1$ and $\dot{x}_2$ are simply linear re-scalings of $w_1$ and $w_2$.)  Restricting to $E = -1$, we find a bound on the possible values of $x_1$.  Specifically, if $\dot{x_1} = \dot{x_2} = 0$ on $\Sigma$, the condition $E = -1$ requires that
$${x_1} = \frac{1}{2} + \frac{m^2\alpha}{2} + \frac{4m}{\sqrt{1 + \frac{1}{\alpha^2}}} = r_\text{max}.$$
For a set of initial conditions on $\Sigma$, the requirement $E = -1$ necessarily implies that $x_1 \leq r_\text{max}$, and if either of $\dot{x}_1$ or $\dot{x}_2$ are non-zero, then the strict inequality $x_1 < r_\text{max}$ holds.  \\

We define coordinates $(r, \theta)$ on $\Sigma$ by
$$r = \frac{x_1}{r_\text{max}}, \quad \theta = \tan^{-1} \left(\frac{\dot{x}_1}{\alpha \dot{x}_2}\right).$$
Under this change of coordinates, the homographic orbit corresponds to the line $\theta = \pi/4$.  For a $9 \times 15$ grid of equally spaced initial conditions in $(r, \theta)$ we numerically integrate (\ref{regMotion1}) - (\ref{regMotion4}) for the corresponding initial conditions and record the first 200 intersections of the orbit with $\Sigma$.  (Integration was preemptively terminated if any of $Q_i, P_i$ exceeded 1000 in absolute value.)  The results of this are shown in Figures \ref{manypoin1} - \ref{manypoin5}.  The observed concentric rings numerically match the predicted result of Moser's Invariant Curve Theorem in \cite{bibCMLectures}, and show that the rhomboidal symmetric-mass orbit is nonlinearly stable for $m \in (.01+ \epsilon, 1]$ for the same $\epsilon$ as in Theorem \ref{resulttheorem}.

\begin{figure}[h]
\begin{center}
\includegraphics[scale=.5]{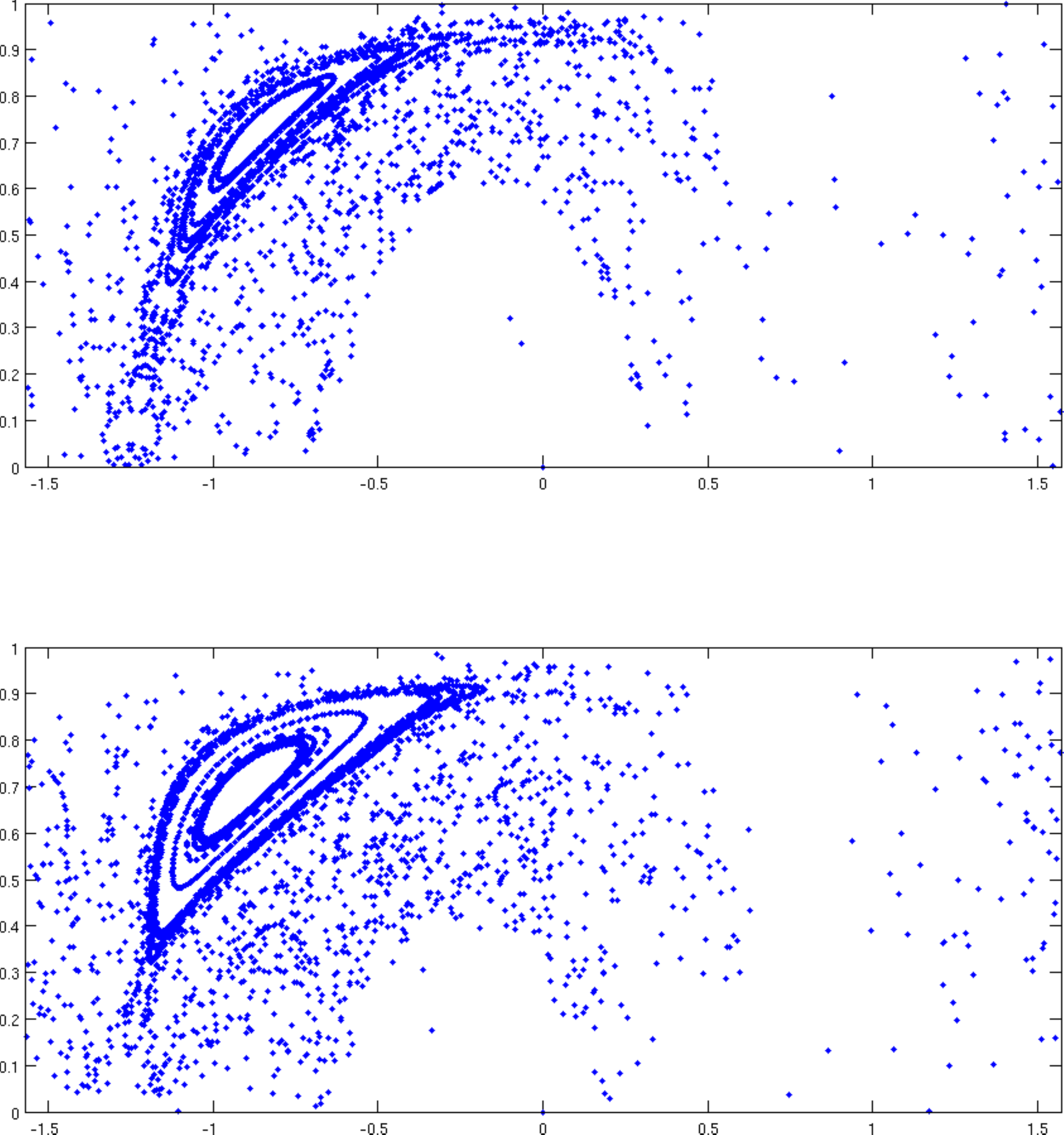}
\caption{Poincar\'e sections plotted for $m = .1$ (top) and $m = .2$ (bottom).  In these plots, $r$ lies on the vertical axis.  The homographic orbit at $\theta = \pi/4$ is not plotted for clarity.}
\label{manypoin1}
\end{center}
\end{figure}

\afterpage{\clearpage
\begin{figure}[h]
\begin{center}
\includegraphics[scale=.5]{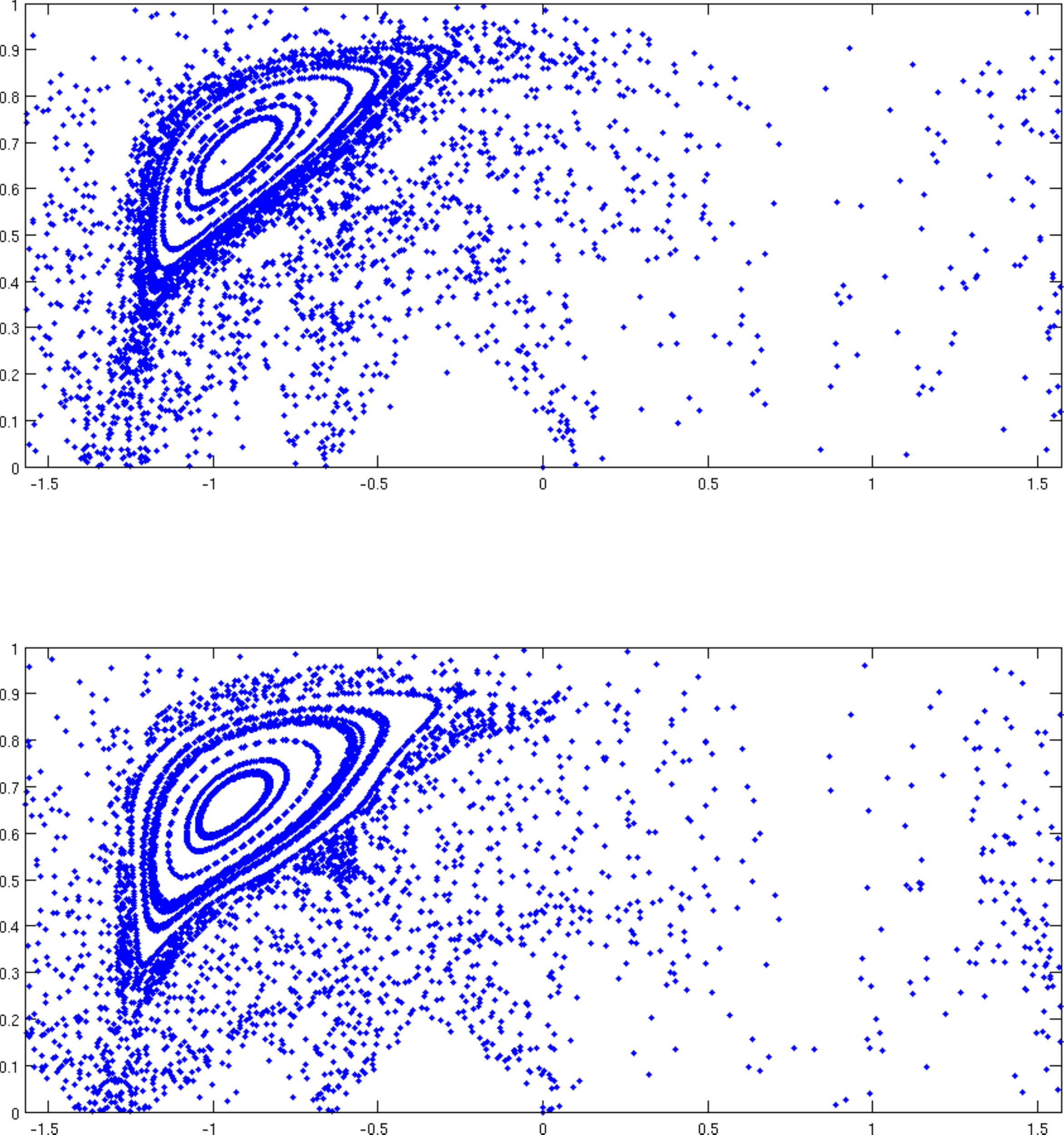}
\caption{Poincar\'e sections plotted for $m = .3$ (top) and $m = .4$ (bottom).}
\label{manypoin2}
\end{center}
\end{figure}
}

\afterpage{\clearpage
\begin{figure}[h]
\begin{center}
\includegraphics[scale=.5]{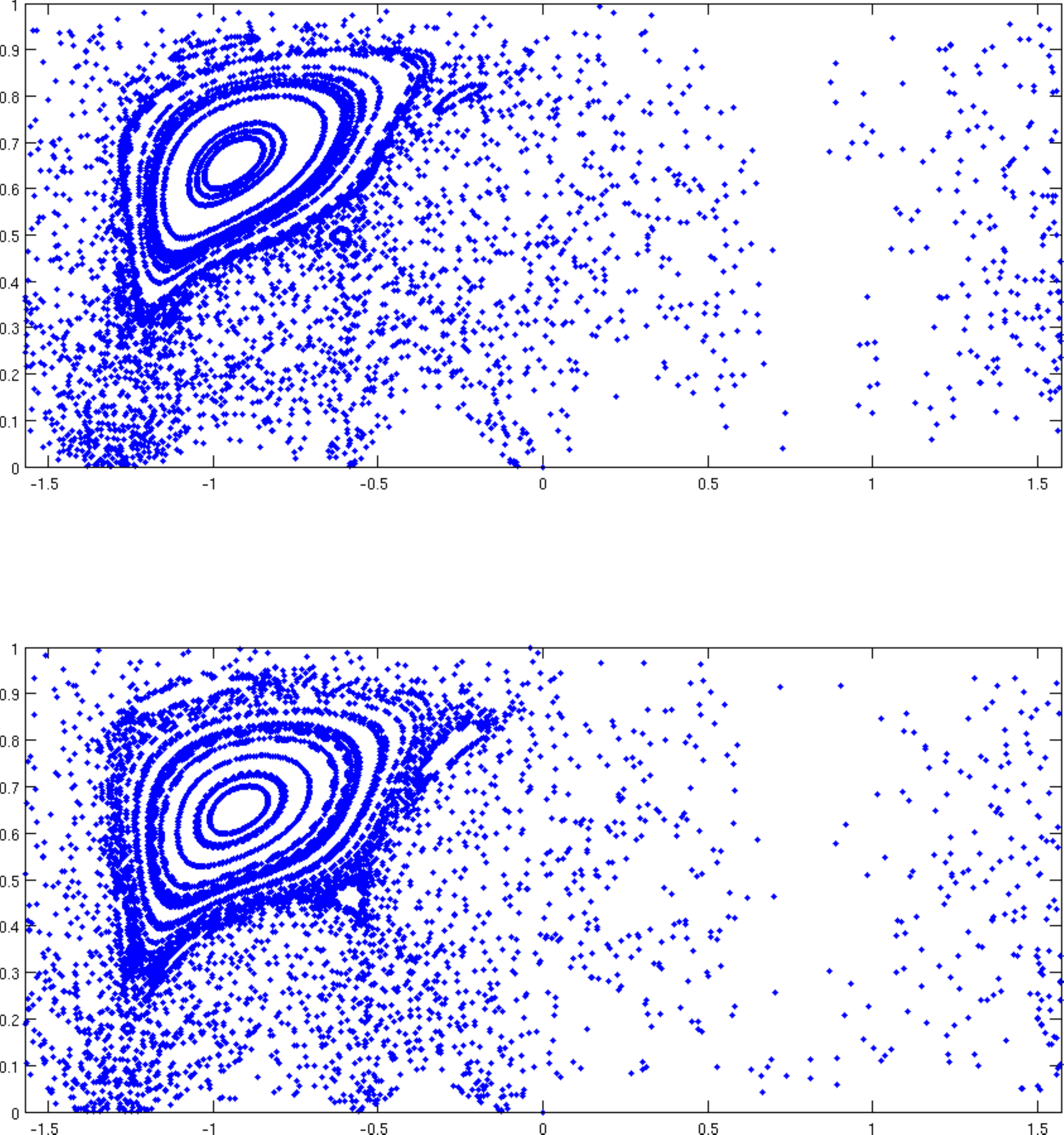}
\caption{Poincar\'e sections plotted for $m = .5$ (top) and $m = .6$ (bottom).}
\label{manypoin3}
\end{center}
\end{figure}
}

\afterpage{\clearpage
\begin{figure}[h]
\begin{center}
\includegraphics[scale=.5]{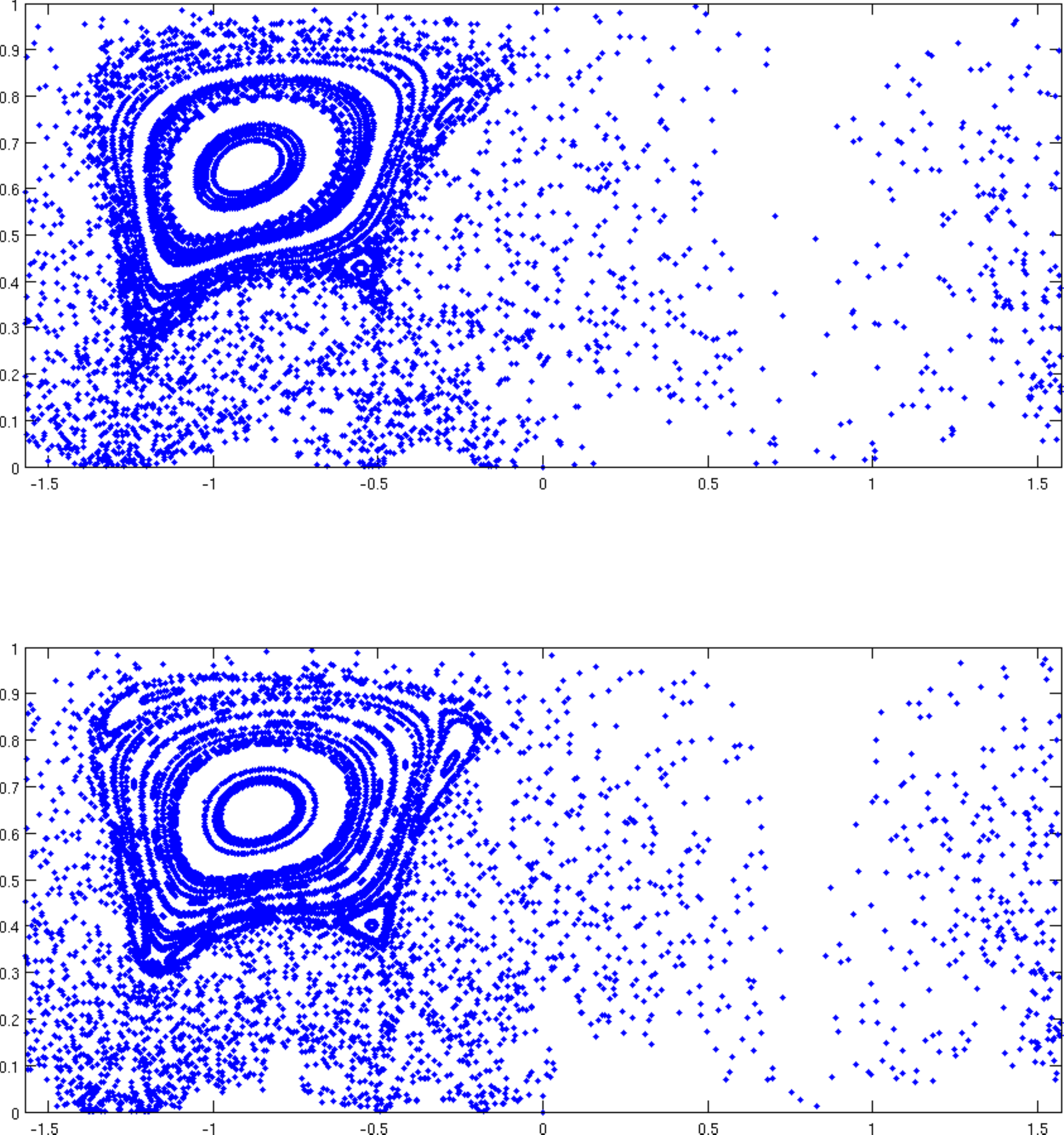}
\caption{Poincar\'e sections plotted for $m = .7$ (top) and $m = .8$ (bottom).}
\label{manypoin4}
\end{center}
\end{figure}
}

\afterpage{\clearpage
\begin{figure}[h]
\begin{center}
\includegraphics[scale=.5]{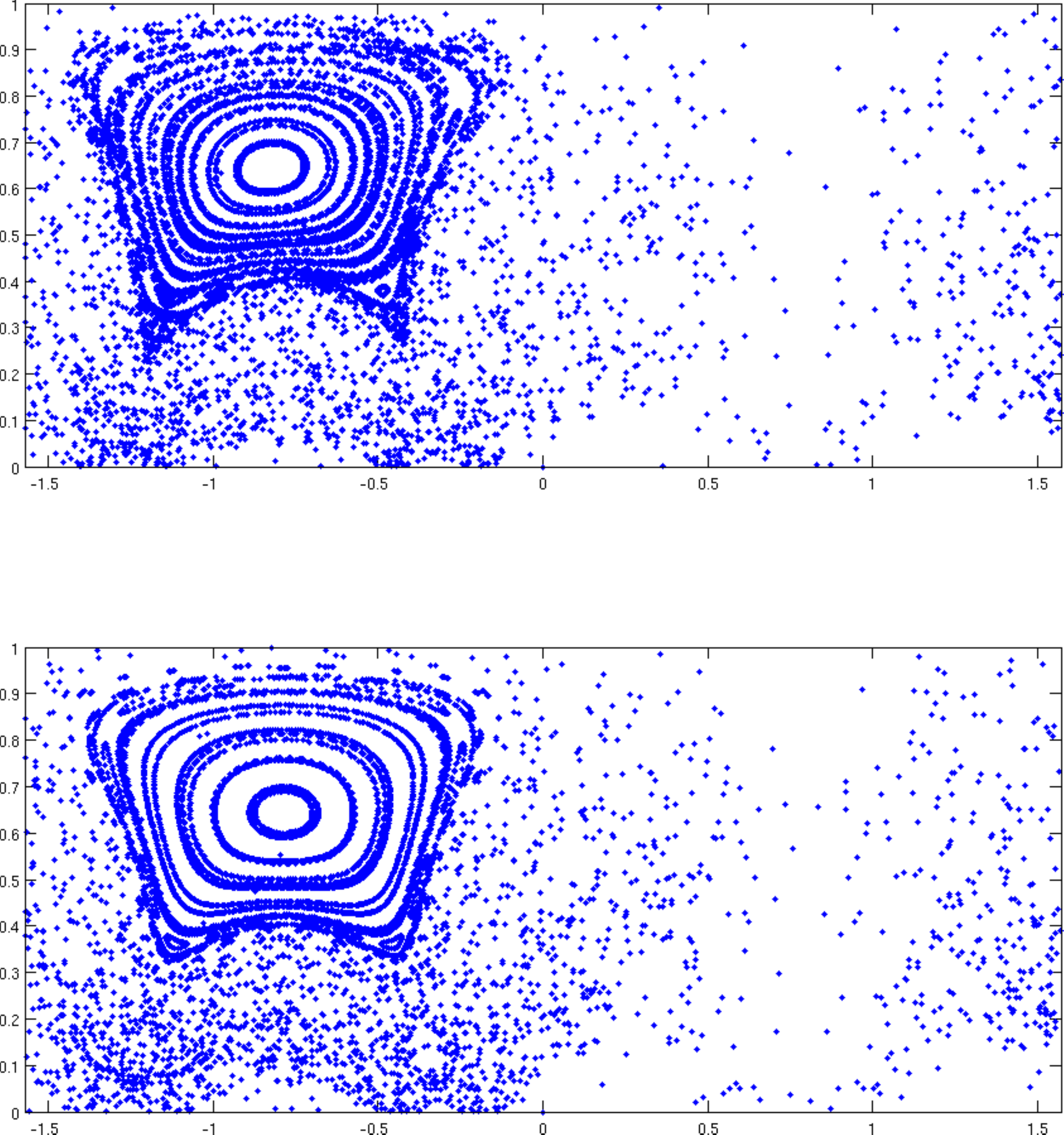}
\caption{Poincar\'e sections plotted for $m = .9$ (top) and $m = 1$ (bottom).}
\label{manypoin5}
\end{center}
\end{figure}
}

\afterpage{\clearpage}

\section{The Rhomboidal Four-Degree-of-Freedom Symmetric-Mass Orbit}\label{sec4df}

\subsection{Description and Existence} \label{sec4dfper}

We now consider the planar Newtonian $4$-body problem with bodies located at 
\begin{equation}
(x_1, x_2), (x_3, x_4), (-x_1, -x_2), (-x_3, -x_4)
\label{config}
\end{equation}
and masses $1$, $m$, $1$, $m$ respectively for some $m \in (0,1]$.  (It is important to note here that $x_2$ does not correspond to $x_2$ from the previous section.)  For the periodic orbit, the bodies still travel along the $x$ and $y$ axes, forming the vertices of a rhombus at all times away from collision, with the same behaviors of the 2DF orbit (such as zero momentum of non-colliding bodies at collision time) still holding. \\

The Hamiltonian for this system is given by $H = K - U$, where
$$K = \frac{1}{4}\left(w_1^2 + w_2^2 \right) + \frac{1}{4m}\left(w_3^2 + w_4^2\right)$$
where the $w_i$ are the conjugate momenta defined by
$$w_1 = 2\dot{x}_1, w_2 = 2\dot{x}_2, w_3 = 2m\dot{x}_3, w_4 = 2m\dot{x}_4,$$
and
\begin{align*}
U &= \frac{1}{2\sqrt{x_1^2 + x_2^2}} + \frac{m^2}{2\sqrt{x_3^2 + x_4^2}} \\
&+ \frac{2m}{\sqrt{(x_3-x_1)^2 + (x_4-x_2)^2}} + \frac{2m}{\sqrt{(x_3+x_1)^2 + (x_4+x_2)^2}}.
\end{align*}

\noindent The angular momentum for the system is given by
$$A = x_1w_2 - x_2w_1 + x_3w_4 - x_4w_3.$$

We can regularize the system under a change of spatial variables and a re-scaling of time.  Define
$$F = w_1(Q_1^2 - Q_2^2) + 2w_2Q_1Q_2 + 2w_3Q_3Q_4 + w_4(Q_4^2 - Q_3^2).$$
Then $F$ induces the canonical change of variables $(x_i, w_i) \leftrightarrow (Q_i, P_i)$ given by
\begin{align*}
x_1 &= Q_1^2 - Q_2^2 \ \ \ &P_1 &= 2w_1Q_1 + 2w_2Q_2 \\
x_2 &= 2Q_1Q_2 \ \ \ &P_2 &= -2w_1Q_2 + 2w_2Q_1 \\
x_3 &= 2Q_3Q_4 \ \ \ &P_3 &= 2w_3Q_4 - 2w_4Q_3 \\
x_4 &= Q_4^2 - Q_3^2 \ \ \ &P_4 &= 2w_3Q_3 + 2w_4Q_4.
\end{align*}

Each of the $P_i$ is linear in $w_i$.  Solving the resulting system of equations yields
\begin{equation*}
\begin{bmatrix}
w_1 \\
w_2
\end{bmatrix}
= \frac{1}{2(Q_1^2 + Q_2^2)}
\begin{bmatrix}
Q_1 & -Q_2 \\
Q_2 & Q_1
\end{bmatrix}
\begin{bmatrix}
P_1 \\
P_2
\end{bmatrix}
\end{equation*}
and
\begin{equation*}
\begin{bmatrix}

w_3 \\
w_4
\end{bmatrix}
= \frac{1}{2(Q_3^2 + Q_4^2)}
\begin{bmatrix}
Q_4 & Q_3 \\
-Q_3 & Q_4
\end{bmatrix}
\begin{bmatrix}
P_3 \\
P_4
\end{bmatrix}.
\end{equation*}


\noindent Setting $\mathbf{Q} = Q_1^2Q_3Q_4 - Q_2^2Q_3Q_4 - Q_1Q_2Q_3^2 + Q_1Q_2Q_4^2$, we now have
$$K = \frac{1}{16} \left(\frac{P_1^2 + P_2^2}{Q_1^2 + Q_2^2} \right) + \frac{1}{16m} \left(\frac{P_3^2 + P_4^2}{Q_3^2 + Q_4^2} \right),$$
\begin{align*}
U &= \frac{1}{2(Q_1^2 + Q_2^2)} + \frac{m^2}{2(Q_3^2 + Q_4^2)} \\
&+ \frac{2m}{\sqrt{(Q_1^2 + Q_2^2)^2 + (Q_3^2 + Q_4^2)^2 - 4\mathbf{Q}}} + \frac{2m}{\sqrt{(Q_1^2 + Q_2^2)^2 + (Q_3^2 + Q_4^2)^2 + 4\mathbf{Q}}},
\end{align*}
and
$$A = \frac{1}{2}\left(Q_1P_2 - Q_2P_1 + Q_3P_4 - Q_4P_3\right).$$

\noindent We can regularize the collisions at the origin by multiplying by a change of time satisfying $\frac{dt}{ds} = (Q_1^2 + Q_2^2)(Q_3^2 + Q_4^2)$.  At the time of collision between the two bodies of mass 1, we have $Q_1 = Q_2 = 0$.  The condition
$$\Gamma = \frac{dt}{ds}(H - E) = 0$$
then yields
$$(Q_3^2 + Q_4^2)\left(\frac{P_1^2 + P_2^2}{16} - \frac{1}{2} \right) = 0$$
and so at collision the momenta $P_1$ and $P_2$ are both finite and satisfy $P_1^2 + P_2^2 = 8$.  Similarly, when $Q_3 = Q_4 = 0$, we get
$$(Q_1^2 + Q_2^2)\left(\frac{P_3^2 + P_4^2}{16m} - \frac{m^2}{2}\right) = 0$$
so the momenta $P_3$ and $P_4$ are both finite and satisfy $P_3^2 + P_4^2 = 8m^3$. \\

Let $\mathcal{A}$ denote the set where
$$x_2 = x_3 = w_2 = w_3 = 0\text{, } x_1 > 0, \text{ and } x_4 > 0.$$
This corresponds to the regularized coordinates
\begin{equation}\label{aRegConds}
Q_2 = Q_3 = P_2 = P_3 = 0.
\end{equation}
Then, when $\mathcal{A}$ holds, the four bodies and their respective momenta lie on the $x$- and $y$-axes, as in the two-degree of freedom (2DF) problem.  We also have
$$Q_i^2 = x_i, \quad w_i = \frac{P_i}{2Q_i} \text{ for } i = 1, 4,$$
which are the same coordinate transformations used in \cite{bibYan1} and in our work in Section \ref{sec2df}.  Furthermore,  we have
$$\dot{Q}_2 \big|_{\mathcal{A}} = \dot{Q}_3 \big|_{\mathcal{A}} = \dot{P}_2 \big|_{\mathcal{A}} = \dot{P}_3 \big|_{\mathcal{A}} =0,$$
\noindent so $\mathcal{A}$ is invariant, and corresponds to the 2DF rhomboidal configuration.  Hence, the 2DF problem embeds nicely into the 4DF problem, and initial conditions from the 2DF problem can also be used to study the 4DF orbit.  This result, combined with the existence of the 2DF orbit from \cite{bibShib1}, and \cite{bibMartinez}, demonstrates the analytic existence of the 4DF rhomboidal orbit.

\subsection{Symmetries of the Rhomboidal Four-Degree-of-Freedom Orbit}\label{syms}

Let
\begin{equation*}
G = 
\begin{bmatrix}
1 & 0 \\ 0 & -1
\end{bmatrix},
\end{equation*}
and define the block matrix
\begin{equation}\label{sMatrix4df}
S = 
\begin{bmatrix}
-G & 0 & 0 & 0 \\
0 & -G & 0 & 0 \\
0 & 0 & G & 0 \\
0 & 0 & 0 & G
\end{bmatrix},
\end{equation}
where $0$ represents the $2 \times 2$ identity matrix.  Then we have
$$S^2 = (-S)^2 = I$$
Hence, $S$ and $-S$ generate a group isomorphic to the Klein four group.  For fixed values of $m$ and $E$, we have
$$\Gamma \circ (\pm S) = \Gamma$$
so $\pm S$ generate a Klein-four symmetry group for $\Gamma$ as well. \\

\begin{theorem}
Let $\gamma$ be a solution to the Hamiltonian system defined by $\Gamma$ for some fixed values of $m \in (0,1]$ and $E < 0$ such that 
$$\gamma(0) = (0,0,0,\zeta_4,\sqrt{8},0,0,0)$$
and
$$\gamma(s_0) = (\zeta_1,0,0,0,0,0,0,\sqrt{8m^3}).$$
(In other words, $\gamma(0)$ corresponds to collision between the two bodies of mass 1, and $\gamma(s_0)$ corresponds to collision between the two bodies of mass $m$.)  Then $\gamma$ extends to a $T = 4s_0$-periodic solution of the same Hamiltonian system, wherein $S$ and $-S$ are time-reversing symmetries for the orbit.  Specifically, for all time $s$, we have
$$-S\gamma(T/2 - s) = \gamma(s) \text{ and }= S\gamma(T - s) = \gamma(s).$$
\end{theorem}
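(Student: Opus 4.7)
The plan is to exploit $S$ and $-S$ as time-reversing symmetries of the Hamiltonian flow and to use uniqueness of ODE solutions at the two collision moments. At $\gamma(0)$ the map $S$ will fix the state, at $\gamma(s_0)$ the map $-S$ will fix the state, and these fixed-point conditions will force the orbit to close up with period $T = 4s_0$. A short block computation, using that $G$ is symmetric and $G^2 = I$, confirms $S^T = S$, $S^2 = I$, and $S^T J S = -J$; in particular $S$ is a symmetric, orthogonal, anti-symplectic involution, and the same holds for $-S$.

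Combined with the already-noted identity $\Gamma \circ (\pm S) = \Gamma$, this yields the standard time-reversing property: whenever $\eta(s)$ solves $z' = J \nabla \Gamma(z)$, so do $S\eta(-s)$ and $-S\eta(-s)$. Indeed, differentiating $\Gamma \circ S = \Gamma$ gives $\nabla \Gamma \circ S = S\,\nabla \Gamma$ (using $S^T = S$), and then for $\tilde \eta(s) = S\eta(-s)$ one computes $\tilde\eta'(s) = -SJS\,\nabla\Gamma(\tilde\eta(s)) = J\,\nabla\Gamma(\tilde\eta(s))$, where the last equality uses $SJS = -J$.

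Next I would check that $S\gamma(0) = \gamma(0)$ and $-S\gamma(s_0) = \gamma(s_0)$. Reading the four $2\times 2$ blocks of $S$ in the coordinate ordering $(Q_1,Q_2,Q_3,Q_4,P_1,P_2,P_3,P_4)$, one sees that $S$ negates exactly $Q_1, Q_3, P_2, P_4$, and each of those coordinates vanishes in $\gamma(0)$; symmetrically, $-S$ negates $Q_2, Q_4, P_1, P_3$, all of which vanish in $\gamma(s_0)$. Since $S\gamma(-s)$ is a solution agreeing with $\gamma$ at $s = 0$, uniqueness gives $\gamma(s) = S\gamma(-s)$ for all $s$ in the common domain. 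Applying the same reasoning to the time-translate $\eta(s) := \gamma(s + 2s_0)$, whose value at $s = -s_0$ equals $\gamma(s_0)$ and is fixed by $-S$, yields $\gamma(s) = -S\gamma(2s_0 - s)$ for all $s$.

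Composing the two relations and using $S^2 = I$ gives $\gamma(s) = -\gamma(s - 2s_0)$, which iterates at once to $\gamma(s + 4s_0) = \gamma(s)$, so the orbit is $T = 4s_0$-periodic. The second relation is then exactly $\gamma(s) = -S\gamma(T/2 - s)$, and combining the first with periodicity yields $\gamma(s) = S\gamma(-s) = S\gamma(T - s)$, as desired. I do not anticipate any serious obstacle: the only subtlety is ensuring the symmetry-based extension of $\gamma$ stays in the smooth domain of $\Gamma$, which is automatic because $\Gamma$ is smooth off total collapse and the extended trajectory only revisits collision points that the Levi-Civita-type change of variables has already regularized.
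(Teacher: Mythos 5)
Your proof is correct and follows the same standard argument the paper relies on (the paper states this theorem without proof, invoking for the 2DF analogue only that ``a standard proof'' shows $-S\gamma(T/2-s)$ and $S\gamma(T-s)$ are solutions and then applying uniqueness): you verify that $\pm S$ are anti-symplectic involutions preserving $\Gamma$, check that the collision states are fixed by $S$ and $-S$ respectively, and compose the two reflections to obtain the period $4s_0$. The block computations and the identification of which coordinates each of $S$ and $-S$ negates all check out against the definition in (\ref{sMatrix4df}).
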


\subsection{Stability Calculations with Symmetry}\label{stabcalc}

Continuing from the brief introduction given in \ref{LinStab1}, if $Y$ is the fundamental matrix solution of
\begin{equation}\label{Linearized2}
\xi' = JD^2\Gamma(\gamma(s))\xi, \quad \xi(0) = Y_0
\end{equation}
for some invertible matrix $Y_0$, then by definition of $X(s)$, $Y(s) = X(s)Y_0$, implying $X(T) = Y(T)Y_0^{-1}$.  Then we have
$$X(T) = Y(T)Y_0^{-1} = Y_0(Y_0^{-1}Y(T))Y_0^{-1}$$
and so $X(T)$ and $Y_0^{-1}Y(T)$ are similar, and stability can be determined by the eigenvalues of either.  For our purposes, the latter will be preferred.\\

The following can be found in \cite{bibRoberts1}:

\begin{lemma}
Suppose $\gamma(s)$ is a $T$-periodic solution of a Hamiltonian system with Hamiltonian $\Gamma$ and a time-reversing symmetry $S$ such that:
\begin{enumerate}
\item[(i)] For some $n \in \mathbb{N}$, $\gamma(-s + T/N) = S(\gamma(s))$ for all $s$;
\item[(ii)] $\Gamma(Sz) = \Gamma(z)$;
\item[(iii)] $SJ = -JS$;
\item[(iv)] $S$ is orthogonal.
\end{enumerate}
Then the fundamental matrix solution $X(s)$ satisfies $$X(-s + T/N) = SX(s)S^T(X(T/N)).$$
\end{lemma}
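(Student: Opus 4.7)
The approach is the standard one: show that both sides of the claimed identity, viewed as matrix-valued functions of $s$, satisfy the same linear ODE with the same initial condition, and invoke uniqueness.

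First I would exploit hypothesis (ii) to derive an intertwining relation for the Hessian. Differentiating $\Gamma(Sz) = \Gamma(z)$ once gives $S^T D\Gamma(Sz) = D\Gamma(z)$, and differentiating again, together with the orthogonality $S^T = S^{-1}$ from (iv), yields
\[
D^2\Gamma(Sz) \;=\; S\, D^2\Gamma(z)\, S^T.
\]
Evaluating along the orbit and substituting (i) $\gamma(-s+T/N) = S\gamma(s)$, the coefficient matrix of the linearized flow at time $-s+T/N$ can be rewritten as $S\, D^2\Gamma(\gamma(s))\, S^T$.

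Next I would introduce the two candidates $W(s) := X(-s+T/N)$ and $Z(s) := S\,X(s)\,S^T\, X(T/N)$. At $s=0$ both equal $X(T/N)$, since $SS^T = I$ and $X(0) = I$. Differentiating $W$ via the chain rule and plugging in the intertwining relation from the previous step gives
\[
W'(s) \;=\; -J\, D^2\Gamma(\gamma(-s+T/N))\, W(s) \;=\; -J S\, D^2\Gamma(\gamma(s))\, S^T\, W(s),
\]
and invoking (iii) in the form $-JS = SJ$ converts this to $W'(s) = SJ\, D^2\Gamma(\gamma(s))\, S^T\, W(s)$. On the other hand, differentiating $Z$ directly and using $S^T S = I$ to insert an identity between $X(s)$ and the trailing $S^T X(T/N)$ produces
\[
Z'(s) \;=\; S\, J D^2\Gamma(\gamma(s))\, X(s)\, S^T X(T/N) \;=\; S J\, D^2\Gamma(\gamma(s))\, S^T\, Z(s).
\]
Thus $W$ and $Z$ satisfy the same time-dependent linear matrix ODE with matching initial condition at $s=0$, so by uniqueness $W(s) = Z(s)$ for all $s$, which is the claim (with $S^T(X(T/N))$ read as $S^T X(T/N)$).

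The main obstacle is purely bookkeeping: one must carefully track which factor is $S$ versus $S^T$, and verify that the minus sign produced by differentiating $X(-s+T/N)$ combines with $SJ = -JS$ in exactly the right way so that the two coefficient matrices coincide rather than differ by a sign. The only substantive input is the chain-rule identity $D^2\Gamma(Sz) = S\,D^2\Gamma(z)\,S^T$; once that is established, the remainder is a formal manipulation finished by ODE uniqueness.
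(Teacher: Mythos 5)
Your proof is correct, and it is essentially the standard argument: the paper itself states this lemma without proof, citing Roberts \cite{bibRoberts1}, whose proof proceeds exactly as you do --- deriving $D^2\Gamma(Sz) = S\,D^2\Gamma(z)\,S^T$ from the invariance and orthogonality hypotheses, then showing both sides of the identity satisfy the same linear ODE with the same value at $s=0$ and invoking uniqueness. The sign bookkeeping with $SJ=-JS$ checks out, so nothing further is needed.
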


\noindent Note that the matrix $S$ given in (\ref{sMatrix4df}) satisfies all the required hypotheses.

\begin{corollary}
Under the same hypotheses,
$$X(T/N) = SB^{-1}S^TB \text{ where } B = X(T/2N).$$
\end{corollary}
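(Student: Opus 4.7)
The plan is to specialize the identity from the Lemma, namely $X(-s + T/N) = SX(s)S^T X(T/N)$, at the unique fixed point of the reflection $s \mapsto T/N - s$ on $[0, T/N]$, which is $s = T/(2N)$. With this choice, both $-s + T/N$ and $s$ collapse to $T/(2N)$, so the left side and the factor $X(s)$ on the right both reduce to $B = X(T/(2N))$. The one-parameter family of identities thus produces the single algebraic relation
$$B = SBS^T X(T/N).$$

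To isolate $X(T/N)$, I would left-multiply by $(SBS^T)^{-1}$. By hypothesis (iv) of the Lemma, $S$ is orthogonal, so $S^T = S^{-1}$, and hence
$$(SBS^T)^{-1} = (S^T)^{-1} B^{-1} S^{-1} = S B^{-1} S^T.$$
Substituting gives $X(T/N) = S B^{-1} S^T B$, which is the claimed identity.

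The only substantive step is recognizing that $s = T/(2N)$ is the correct specialization, since it is precisely the fixed point of the time-reversing reflection and therefore the value at which the Lemma's identity collapses to a relation involving only $B$ and $X(T/N)$. Everything else is routine matrix algebra exploiting the orthogonality of $S$, so no serious obstacle arises in this corollary; the real content lies in the Lemma itself.
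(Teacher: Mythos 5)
Your proof is correct and follows the standard route: evaluating the Lemma's identity at the fixed point $s = T/(2N)$ to get $B = SBS^{T}X(T/N)$ and then inverting $SBS^{T}$ using the orthogonality of $S$ is exactly the argument underlying this corollary (the paper itself states it without proof, citing Roberts). No gaps.
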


\begin{corollary}\label{corFactor}
If $Y(s)$ is the fundamental matrix solution to (\ref{Linearized2}), then
$$Y(-s + T/N) = SY(s)Y_0^{-1}S^TY(T/N)$$
and
$$Y(T/N) = SY_0B^{-1}S^TB \text{ where } B = Y(T/2N).$$
\end{corollary}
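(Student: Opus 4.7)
The plan is to reduce everything to the already-established identities for the standard fundamental matrix $X(s)$ (with $X(0)=I$) by the single substitution $Y(s)=X(s)Y_0$. Since $Y$ and $X$ satisfy the same linear system $\xi'=JD^2\Gamma(\gamma(s))\xi$, and $Y$ has initial value $Y_0$, uniqueness of solutions to linear ODEs yields $Y(s)=X(s)Y_0$, and therefore $X(s)=Y(s)Y_0^{-1}$. This is the only fact about $Y$ I will need beyond the preceding lemma and corollary, which were already observed to apply since the matrix $S$ from (\ref{sMatrix4df}) satisfies hypotheses (i)--(iv).

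For the first identity, I would start from the lemma's conclusion
\[
X(-s+T/N)=SX(s)S^{T}X(T/N),
\]
and multiply on the right by $Y_0$ to obtain $Y(-s+T/N)=SX(s)S^{T}X(T/N)Y_0$. I then substitute $X(s)=Y(s)Y_0^{-1}$ in the first inner factor and $X(T/N)Y_0=Y(T/N)$ in the last, giving immediately
\[
Y(-s+T/N)=SY(s)Y_0^{-1}S^{T}Y(T/N),
\]
which is the first assertion.

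For the second identity, I would start from the preceding corollary, $X(T/N)=S B_X^{-1}S^{T}B_X$ where $B_X=X(T/2N)$. Setting $B=Y(T/2N)$ gives $B_X=BY_0^{-1}$, so $B_X^{-1}=Y_0 B^{-1}$. Substituting yields
\[
X(T/N)=S Y_0 B^{-1}S^{T}B Y_0^{-1},
\]
and multiplying on the right by $Y_0$ (using $Y(T/N)=X(T/N)Y_0$) produces $Y(T/N)=SY_0 B^{-1}S^{T}B$, which is the second assertion.

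There is no real obstacle here: the corollary is a purely algebraic restatement of the lemma and its preceding corollary in terms of a nonstandard initial condition $Y_0$. The only care required is bookkeeping of the $Y_0$ and $Y_0^{-1}$ factors when converting between $X$ and $Y$.
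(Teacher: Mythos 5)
Your proposal is correct and follows the same route the paper intends: the corollary is obtained by substituting $Y(s) = X(s)Y_0$ (the identity the paper establishes at the start of Section \ref{stabcalc}) into the lemma's conclusion and the preceding corollary, and your bookkeeping of the $Y_0$ and $Y_0^{-1}$ factors is accurate. Nothing further is needed.
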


\noindent Similar results for time-preserving symmetries are also presented in \cite{bibRoberts1}, but are not needed for this orbit.  Using these results may allow the computation of the eigenvalues (hence stability) to be accomplished using only a fraction of the orbit.  Applying Corollary \ref{corFactor} with $N = 2$, $S$ as defined in (\ref{sMatrix4df}), and noting that $S^T = S$ yields
$$Y(T/2) = SY_0Y(T/4)^{-1}SY(T/4).$$
Similarly, if $N = 1$, since $S^2 = I$, we get
\begin{align*}
Y(T) &= -SY_0Y(T/2)^{-1}(-S)Y(T/2) \\
     &= SY_0[SY_0Y(T/4)^{-1}SY(T/4)]^{-1}S[SY_0Y(T/4)^{-1}SY(T/4)] \\   
     &= SY_0Y(T/4)^{-1}SY(T/4)Y_0^{-1}SY_0Y(T/4)^{-1}SY(T/4).
\end{align*}
This yields
\begin{align*}
Y_0^{-1}Y(T) &= Y_0^{-1}SY_0Y(T/4)^{-1}SY(T/4)Y_0^{-1}SY_0Y(T/4)^{-1}SY(T/4) \\
&= [Y_0^{-1}SY_0Y(T/4)^{-1}SY(T/4)]^2 \\
&= W^2
\end{align*}
with $W =Y_0^{-1}SY_0Y(T/4)^{-1}SY(T/4)$.  Hence, in order to analyze the stability of the orbit, we need only compute the eigenvalues of $Y$ along a quarter of the orbit. \\

Again, from \cite{bibRoberts1}:

\begin{lemma}
For a symplectic matrix $W$, suppose there is a matrix $K$ such that
\begin{equation*}
\frac{1}{2}(W + W^{-1}) =
\begin{bmatrix}
K^T & 0 \\
0 & K
\end{bmatrix}.
\end{equation*}
Then $W$ is stable if and only if all of the eigenvalues of $K$ are real and have absolute value less than or equal to 1.
\end{lemma}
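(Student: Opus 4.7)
The plan is to exploit the reciprocal structure of the spectrum of a symplectic matrix. Since $W$ is symplectic, its characteristic polynomial is a reciprocal polynomial, so eigenvalues occur in pairs $\{\lambda,\lambda^{-1}\}$ with equal algebraic multiplicity. Applying the standard identity $\det\bigl(xI - f(W)\bigr) = \prod_i \bigl(x - f(\lambda_i)\bigr)$ with $f(w) = \tfrac{1}{2}(w + w^{-1})$ (which is regular on $\operatorname{spec}(W)$ because $W$ is invertible), I obtain that the eigenvalues of $\tfrac{1}{2}(W + W^{-1})$ are exactly the numbers $\mu_i = \tfrac{1}{2}(\lambda_i + \lambda_i^{-1})$ as $\lambda_i$ runs through the spectrum of $W$.

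Next I would invoke the block-diagonal hypothesis to read off the spectrum of $\tfrac{1}{2}(W + W^{-1})$ as the union of the spectra of $K^T$ and $K$; since $K$ and $K^T$ share characteristic polynomials, this is precisely the spectrum of $K$ with every eigenvalue counted twice. Each eigenvalue $\mu$ of $K$ therefore corresponds to a reciprocal pair $\{\lambda,\lambda^{-1}\}$ of eigenvalues of $W$ obtained by solving $\lambda^2 - 2\mu\lambda + 1 = 0$, i.e.\ $\lambda = \mu \pm \sqrt{\mu^2 - 1}$. The remaining step is a short algebraic check: $\lambda$ lies on the unit circle if and only if $\mu$ is real and $|\mu|\leq 1$. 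If $\mu \in [-1,1]$, then $\lambda = \mu \pm i\sqrt{1 - \mu^2}$ satisfies $|\lambda|^2 = 1$; conversely, writing $\lambda = re^{i\theta}$ and matching imaginary parts in $\lambda + \lambda^{-1} = 2\mu$ gives $(r - r^{-1})\sin\theta = 2\,\mathrm{Im}\,\mu$, which forces $r = 1$ precisely when $\mu$ is real, and then $\mu = \cos\theta \in [-1,1]$.

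Chaining these equivalences proves the lemma: $W$ is spectrally stable exactly when its eigenvalues lie on the unit circle, which happens exactly when every $\mu \in \operatorname{spec}(K)$ is real with $|\mu|\leq 1$. No deep obstacle arises. The only minor subtlety is the endpoint case $\mu = \pm 1$, where $\lambda = \pm 1$ is a double root of the quadratic and one might worry about loss of semisimplicity; since the stability criterion in the lemma is phrased purely in terms of where the eigenvalues lie, the spectral computation above is all that is needed. The main conceptual work is thus concentrated in the first step, establishing the eigenvalue correspondence with correct multiplicity in the presence of possible Jordan blocks, which is handled uniformly by the determinant identity cited above.
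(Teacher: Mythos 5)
The paper gives no proof of this lemma to compare against: it is quoted verbatim from Roberts \cite{bibRoberts1}. Your argument is the standard one (and essentially Roberts' own): reciprocal pairing of the spectrum of a symplectic matrix, the spectral mapping theorem for $f(w)=\tfrac12(w+w^{-1})$, reading off $\operatorname{spec}(K)$ counted twice from the block-diagonal form, and the elementary equivalence that the roots of $\lambda^2-2\mu\lambda+1=0$ lie on the unit circle if and only if $\mu$ is real with $|\mu|\leq 1$. That chain is sound and proves the equivalence of the stated criterion with \emph{spectral} stability of $W$.

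The one substantive issue is the point you wave at in your final remarks: the lemma is true only under the reading ``stable $=$ spectrally stable.'' Under the paper's stronger notion of linear stability (unit-circle spectrum plus semisimplicity) the statement is false as written: $W=\bigl[\begin{smallmatrix}1&1\\0&1\end{smallmatrix}\bigr]$ is symplectic with $\tfrac12(W+W^{-1})=I$, so $K=[1]$ meets the criterion while $W^{n}$ is unbounded; more generally, since $f'(\lambda)\neq 0$ for $\lambda\neq\pm1$, a Jordan block of $W$ on the unit circle passes undetected into $K$, whose eigenvalues can still be real in $[-1,1]$. The paper's own later usage (conceding ``only spectral stability'' at parameter values with repeated unit-circle eigenvalues) confirms that the spectral reading is the intended one, so your interpretation is defensible, but it should be made explicit rather than dismissed as a phrasing issue. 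One minor wobble: from $(r-r^{-1})\sin\theta=0$ you cannot conclude $r=1$ merely because $\mu$ is real --- the alternative $\sin\theta=0$ gives real $\lambda$ with $|\mu|=\tfrac12(r+r^{-1})\geq 1$ --- so it is $\mu$ real \emph{together with} $|\mu|\leq 1$ that forces $r=1$; your explicit root formula $\lambda=\mu\pm i\sqrt{1-\mu^2}$ already covers the direction that matters, so nothing is lost.
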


We now show that there is an appropriate choice of $Y_0$ for which $W$ has the required form, further reducing the stability calculations for the orbit.  If we let
\begin{equation*}
\Lambda =
\begin{bmatrix}
I & 0 \\
0 & -I 
\end{bmatrix},
\end{equation*}
then setting
\begin{equation}\label{y0form}
Y_0 =
\left[
\begin{array}{cccc|cccc}
0 & 0 & 0 & 0 & 1 & 0 & 0 & 0 \\
0 & 0 & 1 & 0 & 0 & 0 & 0 & 0 \\
0 & 0 & 0 & 0 & 0 & 1 & 0 & 0 \\
0 & 0 & 0 & 1 & 0 & 0 & 0 & 0 \\
\hline
-1 & 0 & 0 & 0 & 0 & 0 & 0 & 0 \\
0 & 0 & 0 & 0 & 0 & 0 & 1 & 0 \\
0 & -1 & 0 & 0 & 0 & 0 & 0 & 0 \\
0 & 0 & 0 & 0 & 0 & 0 & 0 & 1 

\end{array}
\right]
\end{equation}
yields $-Y_0^{-1}SY_0 = \Lambda$.  (The lines here are provided for ease in reading.  Many of our later analysis will involve breaking $8 \times 8$ matrices down into $4 \times 4$ blocks.)  Furthermore, it is easy to check that $Y_0$ is both orthogonal and symplectic.  If we set $D = -B^{-1}SB$ for $B = Y(T/4)$, we then have
$$W = \Lambda D.$$
Also, since $\Lambda^2 = D^2 = I$, we know immediately that
$$W^{-1} = D\Lambda.$$
Since $B = Y(T/4)$ is symplectic, setting
\begin{equation*}
B =
\begin{bmatrix}
B_1 & B_2 \\
B_3 & B_4
\end{bmatrix}
\text{ and }
S =
\begin{bmatrix}
S_1 & 0 \\
0 & -S_1
\end{bmatrix}
\end{equation*}
gives
\begin{align*}
D &= -B^{-1}SB \\
&= -
\begin{bmatrix}
B_4^T & -B_2^T \\
-B_3^T & B_1^T
\end{bmatrix}
\begin{bmatrix}
S_1 & 0 \\
0 & -S_1
\end{bmatrix}
\begin{bmatrix}
B_1 & B_2 \\
B_3 & B_4
\end{bmatrix}\\
&=-
\begin{bmatrix}
B_4^TS_1B_1 + B_2^TS_1B_3 & B_4^TS_1B_2 + B_2^TS_1B_4 \\
-B_3^TS_1B_1 - B_1^TS_1B_3 & -B_3^TS_1B_2 - B_1^TS_1B_4 \\
\end{bmatrix}\\
&= -
\begin{bmatrix}
K^T & L_1 \\
-L_2 & K
\end{bmatrix}.
\end{align*}
Thus,
\begin{equation*}
W = \Lambda D = 
\begin{bmatrix}
K^T & L_1 \\
L_2 & K
\end{bmatrix}.
\end{equation*}
Similarly, we find that
\begin{equation*}
W^{-1} = D \Lambda = 
\begin{bmatrix}
K^T & -L_1 \\
-L_2 & K
\end{bmatrix}.
\end{equation*}
Thus, we have
\begin{equation}
\frac{1}{2}\left(W + W^{-1}\right) = 
\begin{bmatrix}
K^T & 0 \\
0 & K
\end{bmatrix}
\label{WKrelate}
\end{equation}
for some $4 \times 4$ matrix $K$. \\

\textit{Remark:} The given matrix $Y_0$ in (\ref{y0form}) is not unique.  Different choices of $Y_0$ will give different properties of the monodromy matrix.  It is also worth noting that $Y_0$ is independent of the value of $m$ for this orbit, which is not always true (see \cite{bibBMS}.)\\

We can give formulas for the entries of $K$ in terms of $W$.  Since $B$ is symplectic, we have $J = B^TJB$, and hence
$$B^{-1} = -JB^T J.$$
Using $W = \Lambda D$ for $D = -B^{-1}SB$ and the relation $-SJ = JS$, we find
\begin{align*}
W &= \Lambda (-B^{-1}SB) \\
&= \Lambda JB^T JSB \\
&= -\Lambda JB^T SJB.
\end{align*}
Directly computing $\Lambda J$ and using the block form of $B$, we find that
\begin{equation*}
(\Lambda J) B^T = -
\begin{bmatrix}
0 & I \\
I & 0
\end{bmatrix}
\begin{bmatrix}
B_1^T & B_3^T \\
B_2^T & B_4^T
\end{bmatrix}
= -
\begin{bmatrix}
B_2^T & B_4^T \\
B_1^T & B_3^T
\end{bmatrix}
.
\end{equation*}
Define $\text{col}_i(-SJB)$ to be the $i$th column of the matrix $-SJB$.  Then we have $\text{col}_i(-SJB) = -SGc_i$ where $c_i$ is the $i$th column of $B$.  Using the above two formulas, this implies that the $(i,j)$ entry of $W$ is given by $-c_i^T S J C_j$.  Equation (\ref{WKrelate}) shows that the $(i,j)$ entry of $K$ is the $(i+4, j+4)$ entry of $W$.  Hence, 
\begin{equation}
K = 
\begin{bmatrix}
-c_1^T SJc_5 & -c_1^T SJc_6 & -c_1^T SJc_7 & -c_1^T SJc_8\\
-c_2^T SJc_5 & -c_2^T SJc_6 & -c_2^T SJc_7 & -c_2^T SJc_8\\
-c_3^T SJc_5 & -c_3^T SJc_6 & -c_3^T SJc_7 & -c_3^T SJc_8\\
-c_4^T SJc_5 & -c_4^T SJc_6 & -c_4^T SJc_7 & -c_4^T SJc_8\\
\end{bmatrix}.
\label{Kentries}
\end{equation}

\textit{Remark:} Computing the entries of $K$ this way will allow us to bypass computing $W^{-1}$.  This is preferred as a numerical method as $W$ may be very poorly conditioned. \\

\subsection{\textit{A priori} Determination of Values of $K$}\label{apriori}

With a bit more work, we can show some additional properties of the matrix $K$.  Let $v = Y_0^{-1}\gamma^{\ '}(0)/||\gamma^{\ '}(0)||$ or, equivalently, $Y_0^T\gamma^{\ '}(0)/||\gamma^{\ '}(0)||$.  By Corollary \ref{corFactor}, since $Y_0$ is orthogonal and $S$ is symmetric, we have
$$W = Y_0^{-1}SY_0B^{-1}SB = Y_0^{-1}SY_0B^{-1}S^TB = Y_0^TY(T/2).$$
Since $\gamma^{\ '}(s)$ is a solution of the linearized equations $\dot{\xi} = JD^2\Gamma(\gamma(s))\xi$ and $\gamma^{\ '}(0) = Y(0)Y_0^{-1}\gamma^{\ '}(0) = Y(0)v$, we also know that $\gamma^{\ '}(s) = Y(s)Y_0^{-1}\gamma^{\ '}(0) = Y(s)v$.  This implies
\begin{equation}\label{eigeq1}
Y_0^{-1}\gamma^{\ '}(T/2) = Y_0^TY(T/2)v = Wv.
\end{equation}
By the symmetry $\gamma(s) = -S\gamma(T/2 - s)$, we also have $\gamma^{\ '}(s) = S\gamma^{\ '}(T/2 - s)$.  Setting $s = 0$ in this setting tells us that $\gamma^{\ '}(0) = S\gamma^{\ '}(T/2)$.  Since
$$\gamma^{\ '}(0) = (\alpha, 0, 0, 0, 0, 0, 0, 0)$$
for some real number $\alpha$, we have $-S\gamma^{\ '}(0) = \gamma^{\ '}(0)$.  Thus
\begin{equation}\label{eigeq2}
Y_0^{-1}\gamma^{\ '}(T/2) = Y_0^TS\gamma^{\ '}(0) = -Y_0^T\gamma^{\ '}(0) = -v.
\end{equation}
Combining (\ref{eigeq1}) and (\ref{eigeq2}) gives $Wv = -v$, and so $-1$ is an eigenvalue of $W$ with eigenvector $v$.  By definition, we have that
$$v = Y_0^T\gamma^{\ '}(0)/||\gamma^{\ '}(0)|| = (0, 0, 0, 0, 1, 0, 0, 0).$$
From the form of $W$, this implies that
\begin{equation*}
K
\begin{bmatrix}
1 \\
0 \\
0 \\
0
\end{bmatrix}
=
\begin{bmatrix}
-1 \\
0 \\
0 \\
0 \\
\end{bmatrix}
\end{equation*}
so the first column of $K$ must be $[-1,0,0,0]^T$. \\

\textit{Remark:} The choice of $Y_0$ in (\ref{y0form}) forces $v$ to be in the eigenspace of $K$ corresponding to the eigenvalue $-1$.  Alternative choices of $Y_0$ can result in $Kv = v$.  \\

In numerically computing $K$, additional patterns arose in the entries.  These patterns can be explained and verified analytically.  Let $\mathcal{M}$ denote the set of matrices of the form
\begin{equation*}
\begin{bmatrix}
m_{11} & 0 & 0 & m_{14}\\
0 & m_{22} & m_{23} & 0\\
0 & m_{32} & m_{33} & 0\\
m_{41} & 0 & 0 & m_{44}\\
\end{bmatrix}
\end{equation*}
where all of the listed $m_{ij} \in \mathbb{R}$.  (We allow for $m_{ij} = 0$.)  Then $\mathcal{M}$ is closed under multiplication.  Let $\mathcal{M}_2$ denote the set of $8 \times 8$ matrices whose $4 \times 4$ blocks are in $\mathcal{M}$.  That is to say, $\mathcal{M}_2$ consists of matrices of the form
\begin{equation*}
\begin{bmatrix}
M_1 & M_2\\
M_3 & M_4\\
\end{bmatrix}
\end{equation*}
where each of the $M_i \in \mathcal{M}$. Then $\mathcal{M}_2$ is closed under multiplication as well.  Furthermore, it is readily verified that each of $J$, $S$, and $Y_0$ are in $\mathcal{M}_2$.  Using a computer algebra system, we find that the matrix $D^2\Gamma$ is of the form
\begin{equation*}
\left[
\begin{array}{cccc|cccc}
* & a & a & * & 0 & 0 & a & * \\
a & * & * & a & 0 & 0 & a & a \\
a & * & * & a & a & a & 0 & 0 \\
* & a & a & * & * & a & 0 & 0 \\
\hline
0 & 0 & a & * & * & 0 & 0 & 0 \\
0 & 0 & a & a & 0 & * & 0 & 0 \\
a & a & 0 & 0 & 0 & 0 & * & 0 \\
* & a & 0 & 0 & 0 & 0 & 0 & *
\end{array}
\right].
\end{equation*}
Here, the zeros denote entries for which the mixed partials evaluate to zero identically, and the entries denoted $a$ are entries for which the mixed partials evaluate to zero assuming the conditions given by (\ref{aRegConds}) which hold along the periodic orbit $\gamma(s)$.  Under such conditions, we have $D^2\Gamma \in \mathcal{M}_2$.  \\

\begin{lemma}
If $M \in \mathcal{M}_2$, then the system of differential equations given by
$$\eta' = M\eta$$
and initial condition
$$\eta(0) = (*, 0, 0, *, *, 0, 0, *)^T$$
has solutions of the form
$$\eta(s) = (f_1(s), 0, 0, f_4(s), f_5(s), 0, 0, f_8(s))^T$$
\end{lemma}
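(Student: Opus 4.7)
The plan is to identify a four-dimensional subspace of $\mathbb{R}^8$ that (i) contains the prescribed initial condition and (ii) is invariant under left multiplication by every matrix in $\mathcal{M}_2$. Once both facts are in hand, uniqueness of solutions to linear ODEs immediately forces the entire trajectory to lie in this subspace, which is exactly the claim.

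Concretely, let
$$V = \{(v_1, 0, 0, v_4, v_5, 0, 0, v_8)^T : v_i \in \mathbb{R}\} \subset \mathbb{R}^8.$$
The initial condition is in $V$ by hypothesis. The first step is a direct calculation: if $N \in \mathcal{M}$ and $u = (u_1, 0, 0, u_4)^T$, then
$$N u = (m_{11}u_1 + m_{14}u_4,\; 0,\; 0,\; m_{41}u_1 + m_{44}u_4)^T,$$
because the middle two rows of $N$ have nonzero entries only in the middle two columns, which get multiplied by zeros. Hence each block in $\mathcal{M}$ maps the four-dimensional subspace $\{(*,0,0,*)^T\}$ into itself.

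Second, write $M \in \mathcal{M}_2$ in block form with blocks $M_i \in \mathcal{M}$, and write $\eta = (u, v)^T \in V$ with both $u$ and $v$ of the form $(*,0,0,*)^T$. Then
$$M\eta = \bigl(M_1 u + M_2 v,\; M_3 u + M_4 v\bigr)^T,$$
and by the first step each block component again has zeros in positions $2$ and $3$. Therefore $M\eta \in V$, so $V$ is invariant under $M$.

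Finally, since $V$ is $M$-invariant, the restriction $M\big|_V$ gives a well-defined linear ODE on $V$, which admits a unique solution with the given initial condition; call this solution $\tilde{\eta}(s)$. Viewed as a solution of the full eight-dimensional system $\eta' = M\eta$, it satisfies the same initial condition as $\eta(s)$, so uniqueness of solutions to linear ODEs in $\mathbb{R}^8$ forces $\eta(s) = \tilde{\eta}(s) \in V$ for all $s$. Writing the components gives the stated form $\eta(s) = (f_1(s), 0, 0, f_4(s), f_5(s), 0, 0, f_8(s))^T$. The only real content is the invariance computation; there is no subtle obstacle, since the zero pattern defining $\mathcal{M}$ was engineered precisely so that the middle coordinates decouple from the outer ones.
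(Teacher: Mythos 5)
Your proposal is correct and follows essentially the same route as the paper: both arguments reduce to checking that left multiplication by a matrix in $\mathcal{M}_2$ preserves the zero pattern in positions $2$, $3$, $6$, and $7$, and then invoke existence and uniqueness of solutions to conclude that the trajectory stays in that subspace. Your blockwise organization of the invariance computation is a slightly more explicit version of the single matrix-vector product the paper displays, but the substance is identical.
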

\begin{proof}
We verify that $M\eta$ has the proper form.  Note that 
\begin{equation*}
\left[
\begin{array}{cccc|cccc}
* & 0 & 0 & * & * & 0 & 0 & * \\
0 & * & * & 0 & 0 & * & * & 0 \\
0 & * & * & 0 & 0 & * & * & 0 \\
* & 0 & 0 & * & * & 0 & 0 & * \\
\hline
* & 0 & 0 & * & * & 0 & 0 & * \\
0 & * & * & 0 & 0 & * & * & 0 \\
0 & * & * & 0 & 0 & * & * & 0 \\
* & 0 & 0 & * & * & 0 & 0 & *
\end{array}
\right]
\begin{bmatrix}
* \\
0 \\
0 \\
* \\
* \\
0 \\
0 \\
* \\
\end{bmatrix}
=
\begin{bmatrix}
* \\
0 \\
0 \\
* \\
* \\
0 \\
0 \\
* \\
\end{bmatrix}.
\end{equation*}
Hence, the zeros in the 2nd, 3rd, 6th, and 7th are preserved under multiplication by $M$.  So 
$$\eta(s) = (f_1(s), 0, 0, f_4(s), f_5(s), 0, 0, f_8(s))^T$$
is a solution of $\eta' = M\eta$.  Existence and uniqueness of solutions implies that $\eta(s)$ is the only solution of the system.
\end{proof}

\begin{corollary}
If $M \in \mathcal{M}_2$, then the system of differential equations given by
$$\eta' = M\eta$$
and initial condition
$$\eta(0) = (0, *, *, 0, 0, *, *, 0)^T$$
has solutions of the form
$$\eta(s) = (0, f_2(s), f_3(s), 0, 0, f_6(s), f_7(s), 0)^T.$$
\end{corollary}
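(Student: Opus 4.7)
The plan is to mimic the proof of the preceding lemma almost verbatim, exchanging the roles of the ``zero'' and ``free'' coordinates. First I would observe that the key algebraic fact driving the lemma is that any matrix in $\mathcal{M}$ maps the subspace $\{(0,\ast,\ast,0)^T\}$ into itself, by exactly the same sparsity check that showed it maps $\{(\ast,0,0,\ast)^T\}$ into itself: the nonzero pattern of an element of $\mathcal{M}$ is block-diagonal with respect to the splitting of coordinates $\{1,4\}$ versus $\{2,3\}$, so the two invariant subspaces are symmetric counterparts of one another.

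Next I would lift this observation to $\mathcal{M}_2$. Writing any $M \in \mathcal{M}_2$ in the $4\times 4$ block form with blocks $M_1,M_2,M_3,M_4 \in \mathcal{M}$, and writing the candidate vector as $\eta = (\eta^{(1)},\eta^{(2)})^T$ with each $\eta^{(i)}$ of the shape $(0,\ast,\ast,0)^T$, the product $M\eta$ has top block $M_1\eta^{(1)}+M_2\eta^{(2)}$ and bottom block $M_3\eta^{(1)}+M_4\eta^{(2)}$. By the first observation each summand individually has the form $(0,\ast,\ast,0)^T$, hence so does each block of $M\eta$, and thus $M\eta$ lies in the subspace
\begin{equation*}
V = \{(0,\ast,\ast,0,0,\ast,\ast,0)^T\}.
\end{equation*}

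With that preservation property in hand, I would finish exactly as in the lemma. The vector field $\eta \mapsto M\eta$ restricts to a linear ODE on $V$, so writing $\eta(s) = (0,f_2(s),f_3(s),0,0,f_6(s),f_7(s),0)^T$ produces a genuine solution of $\eta' = M\eta$ with the prescribed initial data $\eta(0) \in V$. Existence and uniqueness of solutions to linear ODEs then forces this to be \emph{the} solution, giving the claimed form.

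There is no real obstacle here: everything reduces to the sparsity bookkeeping for $\mathcal{M}$, which is symmetric between the two complementary index sets $\{1,4\}$ and $\{2,3\}$. If desired, one could phrase this even more compactly by noting that conjugation by the permutation swapping coordinates $1\leftrightarrow 2$ and $3\leftrightarrow 4$ (within each $4\times 4$ block) carries $\mathcal{M}$ to itself and interchanges the two invariant subspaces, so the corollary is the image of the lemma under this automorphism.
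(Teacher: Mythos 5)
Your proposal is correct and follows essentially the same route as the paper: the paper proves the preceding lemma by checking that the sparsity pattern of $\mathcal{M}_2$ preserves the relevant coordinate subspace and then invoking existence and uniqueness, and it leaves this corollary as the evident mirror-image of that argument on the complementary subspace, which is exactly what you carry out.
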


\begin{corollary}
If $\xi(0) \in \mathcal{M}_2$, then the solution to the system of linearized equations given by (\ref{Linearized2}) satisfies $\xi(s) \in \mathcal{M}_2$ for all $s$.
\end{corollary}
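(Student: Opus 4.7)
The plan is to reduce the matrix ODE $\xi' = JD^2\Gamma(\gamma(s))\xi$ to eight independent vector ODEs, one per column of $\xi$, and then apply the preceding Lemma and Corollary to each column individually. The key observation is that an element of $\mathcal{M}_2$ is precisely characterized by the zero pattern of its columns: direct inspection of the block structure shows that columns $1, 4, 5, 8$ have the ``type-A'' form $(*, 0, 0, *, *, 0, 0, *)^T$ that matches the initial condition in the Lemma, while columns $2, 3, 6, 7$ have the ``type-B'' form $(0, *, *, 0, 0, *, *, 0)^T$ that matches the initial condition in the immediately preceding Corollary.

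First I would verify that the coefficient matrix $M(s) := JD^2\Gamma(\gamma(s))$ lies in $\mathcal{M}_2$ for every $s$. This uses two facts already in place: $D^2\Gamma(\gamma(s)) \in \mathcal{M}_2$ (from the authors' computer-algebra calculation, together with the fact that the conditions (\ref{aRegConds}) are invariant and hold along $\gamma(s)$), and $J \in \mathcal{M}_2$ by direct inspection. Closure of $\mathcal{M}_2$ under multiplication then gives $M(s) \in \mathcal{M}_2$ throughout.

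Writing $\xi(s) = [\eta^{(1)}(s) \mid \cdots \mid \eta^{(8)}(s)]$ column-by-column, each column satisfies $\eta^{(i)\prime} = M(s)\eta^{(i)}$. Since $\xi(0) \in \mathcal{M}_2$, each $\eta^{(i)}(0)$ already has its appropriate type-A or type-B zero pattern. Applying the Lemma to $i \in \{1, 4, 5, 8\}$ and the preceding Corollary to $i \in \{2, 3, 6, 7\}$ shows that each $\eta^{(i)}(s)$ preserves its zero pattern for all $s$. The arguments in those two results only use that the coefficient matrix lies in $\mathcal{M}_2$ pointwise (to conclude that $M\eta$ has the required zero pattern whenever $\eta$ does, so the reduced $4 \times 4$ subsystem decouples from the trivial components), so the explicit $s$-dependence of $M(s)$ causes no difficulty. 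Reassembling the columns yields $\xi(s) \in \mathcal{M}_2$ for all $s$.

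The only step that might be considered nontrivial is confirming the pointwise sparsity of $D^2\Gamma(\gamma(s))$, but this has already been carried out symbolically. Everything else is a direct, essentially bookkeeping application of the two prior results, so I do not anticipate a substantive obstacle.
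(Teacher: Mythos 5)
Your proposal is correct and follows exactly the route the paper intends: verify that the coefficient matrix $JD^2\Gamma(\gamma(s))$ lies in $\mathcal{M}_2$ pointwise (using $J\in\mathcal{M}_2$, the computed sparsity of $D^2\Gamma$ along $\gamma$, and closure under multiplication), then apply the Lemma and its first Corollary column by column, since the columns of a matrix in $\mathcal{M}_2$ are precisely vectors of the two zero patterns treated there. Your explicit remark that the Lemma's argument only uses the sparsity of $M(s)$ at each instant, so time dependence is harmless, is a point the paper leaves implicit but is exactly right.
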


\textit{Remarks:}

\begin{enumerate}
\item[(i)] In terms of the 4DF Rhomboidal orbit, this form very nicely decomposes phase space into a direct sum of the subspaces $\mathcal{A} = \{Q_2 = Q_3 = P_2 = P_3 = 0\}$ and $\mathcal{A}^\perp = \{Q_1 = Q_4 = P_1 = P_4 = 0\}$.  This decomposition is due in part to the coordinate transformation we chose.  The choice of notation for $\mathcal{A}^\perp$ is appropriate in that $\mathcal{A}^\perp$ and $\mathcal{A}$ are orthogonal complements in $\mathbb{R}^8$.  The two subspaces are also skew-orthogonal: if $a_1 \in \mathcal{A}$ and $a_2 \in \mathcal{A}^\perp$, then $a_1^TJa_2 = 0$.

\item[(ii)] Matrices of the form $\mathcal{M}$ and $\mathcal{M}_2$ are similar to the diamond product discussed in \cite{bibLong}.  Specifically, $\Sigma^{-1} M \Sigma = A_1 \Diamond A_2$ for some matrices $A_1$ and $A_2$, where $M \in \mathcal{M}_2$ and $\Sigma$ is the permutation matrix corresponding to $\sigma = (1 \ 2 \ 3 \ 4 \ 5 \ 6 \ 7 \ 8).$ Furthermore, one of these two matrices corresponds to the 2DF setting.

\item[(iii)] The particular choice of $Y_0$ given in (\ref{y0form}) is again important for this argument.
\end{enumerate}

Assuming the initial condition $\xi(0) = Y_0$, then $\xi(s) \in \mathcal{M}_2$ for all $s$.  Hence, $W \in \mathcal{M}_2$.  It is easily shown that if $M \in \mathcal{M}_2$ is invertible, then $M^{-1} \in \mathcal{M}_2$.  By Equation (\ref{WKrelate}), it follows that $K \in \mathcal{M}$, and so we know that
\begin{equation}\label{Kform}
K = 
\begin{bmatrix}
-1 & 0 & 0 & * \\
0 & a & b & 0 \\
0 & c & d & 0 \\
0 & 0 & 0 & e
\end{bmatrix}
\end{equation}
by the above result about one of the eigenvectors of $W$.  Hence, the remaining three eigenvalues of $K$ are simply those of the central $2 \times 2$ matrix together with $e$.  \\

\textit{Remark:} Owing to the decomposition mentioned earlier, the position of $e$ in the matrix $K$ indicates that it should be an eigenvalue corresponding to the behavior of the orbit in the subspace $\mathcal{A}$, along with the trivial eigenvalue $-1$ from the $(1,1)$ position.  Hence, computing the linear stability of the 4DF orbit automatically gives the stability of the 2DF orbit.\\

The coordinate changes that we used to regularize the system did not ``factor out'' the angular momentum (as the polar symplectic transformation does), and so we expect that $K$ will have another trivial eigenvalue $\pm 1$ corresponding to this integral.  To demonstrate this, let $\gamma(s)$ again represent the periodic orbit with period $T$ as above, and define $\hat{v}(s) = \nabla A(\gamma(s))$.  As in \cite{bibMHO}, p. 134, Lemma 7, we consider $\hat{v}(s)$ as a left eigenvector of a particular matrix related to the monodromy matrix.  We find that
$$\hat{v} = \frac{1}{2}\left(P_2, -P_1, P_4, -P_3, -Q_2, Q_1, -Q_4, Q_3 \right).$$
Since
$$\gamma(0) = (0,0,0,\zeta_4,\sqrt{8},0,0,0),$$
we know that
$$\hat{v}(0) = \frac{1}{2} \left(0,-\sqrt{8}, 0, 0, 0, 0, -\zeta_4, 0\right).$$
If $\phi(s,z)$ represents the solution to the system of linearized differential equations with initial condition $z$, we know that
$$A(\phi(s,z)) = A(z).$$
Differentiating with respect to $z$ gives
$$\nabla A(\phi(s,z))\frac{\partial \phi}{\partial z}(s,z) = \nabla A(z)$$
or, equivalently
$$\hat{v}(s)X(s) = \hat{v}(0)$$
where $X(s)$ is the fundamental matrix solution.  Setting $s = T/2$ and substituting $X(T/2) = Y_0(Y_0^{-1}Y(T/2))Y_0^{-1}$ gives
$$\hat{v}(T/2)Y_0(Y_0^{-1}Y(T/2))Y_0^{-1} = \hat{v}(0)$$
and so
$$\hat{v}(T/2)Y_0(Y_0^{-1}Y(T/2)) = (\hat{v}(T/2)Y_0)W= \hat{v}(0)Y_0.$$
By the symmetry of the orbit, we know that $\gamma(T/2) = -\gamma(0)$, which gives
$$\hat{v}(T/2) = -\hat{v}(0),$$
and therefore
$$(\hat{v}(0)Y_0)W= -\hat{v}(0)Y_0.$$
Hence $\hat{v}(0)Y_0$ is a left eigenvector for $W$ with eigenvalue $-1$.  \\

This additional eigenvector and eigenvalue gives us more information about the structure of $W$, and hence, of $K$.  We know that
\begin{equation*}
\hat{v}Y_0
\begin{bmatrix}
K^T & 0 \\ 0 & K
\end{bmatrix}
= - \hat{v}Y_0,
\end{equation*}
We readily compute $\hat{v}(0)Y_0 = \frac{1}{2}(0,-\zeta_4,\sqrt{8},0,0,0,0,0)$.  From this, we know that
$$(0,-\zeta_4,\sqrt{8},0)K^T = - (0,-\zeta_4,\sqrt{8},0).$$
Since $K \in \mathcal{M}$, this requires that the additional $-1$ eigenvalue comes from the central $2 \times 2$ block in $K$.  Furthermore, this imposes some relations on the entries $a, b, c, d$ in (\ref{Kform}).  In particular,
$$b = \frac{(a + 1)\zeta_4}{\sqrt{8}},$$
$$c = \frac{(d + 1)\sqrt{8}}{\zeta_4}.$$
\textit{Remarks:} 
\begin{enumerate}
\item[(i)] Since $K$ is real-valued, this result, along with other results about the form of $K$, force all of the eigenvalues of $K$ to be real numbers.
\item[(ii)] This analysis is an improvement over work done in \cite{bibBMS}, in which the $-1$ eigenvalue corresponding to angular momentum showed up numerically but could not be factored out \textit{a priori}.  This is most likely due to the relative simplicity of the rhomboidal orbit.
\end{enumerate}

\subsection{Results}\label{Results}

We numerically obtain the matrix $W$ by a numerical integration of the linearized systems (\ref{Linearized2}) and the initial conditions computed in Section \ref{Numerics}.  The values of $a$, $d$, and $e$ in the matrix $K$, as given in (\ref{Kform}), are readily computed using (\ref{Kentries}) and the computed value of $\zeta_4$ from Section \ref{Numerics}.  The resulting eigenvalue calculations are represented in Figures \ref{unstabplot} and \ref{stabplot}. \\

\begin{figure}[h]
\begin{center}
\includegraphics[scale=.45]{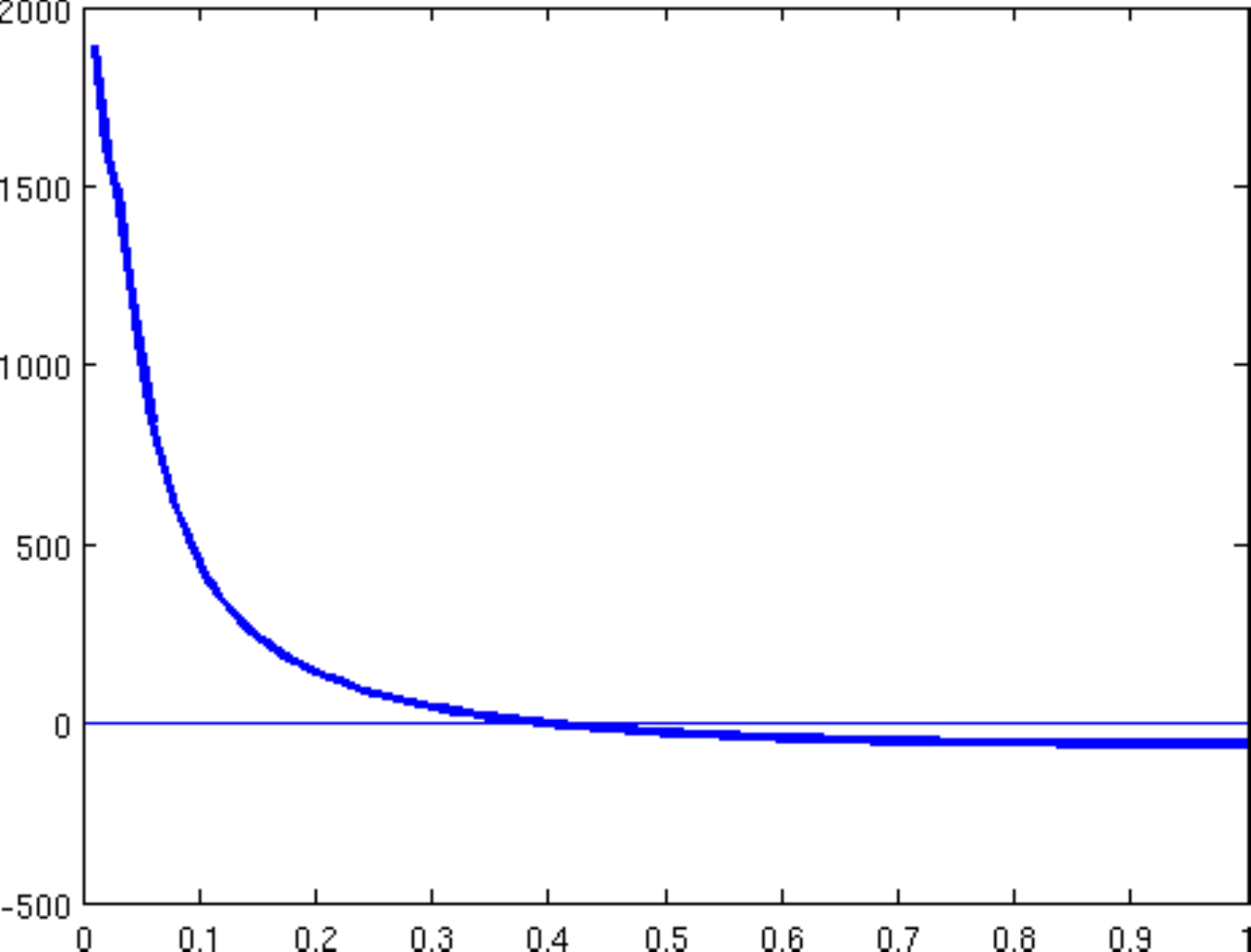}
\caption{A plot of the nontrivial eigenvalue of the central $2 \times 2$ submatrix of $K$ as a function of $m$.  This eigenvalue crosses the $y$-axis for some value of $m \approx 0.4$.
}
\label{unstabplot} 
\end{center}
\end{figure}

\begin{figure}[h]
\begin{center}
\includegraphics[scale=.45]{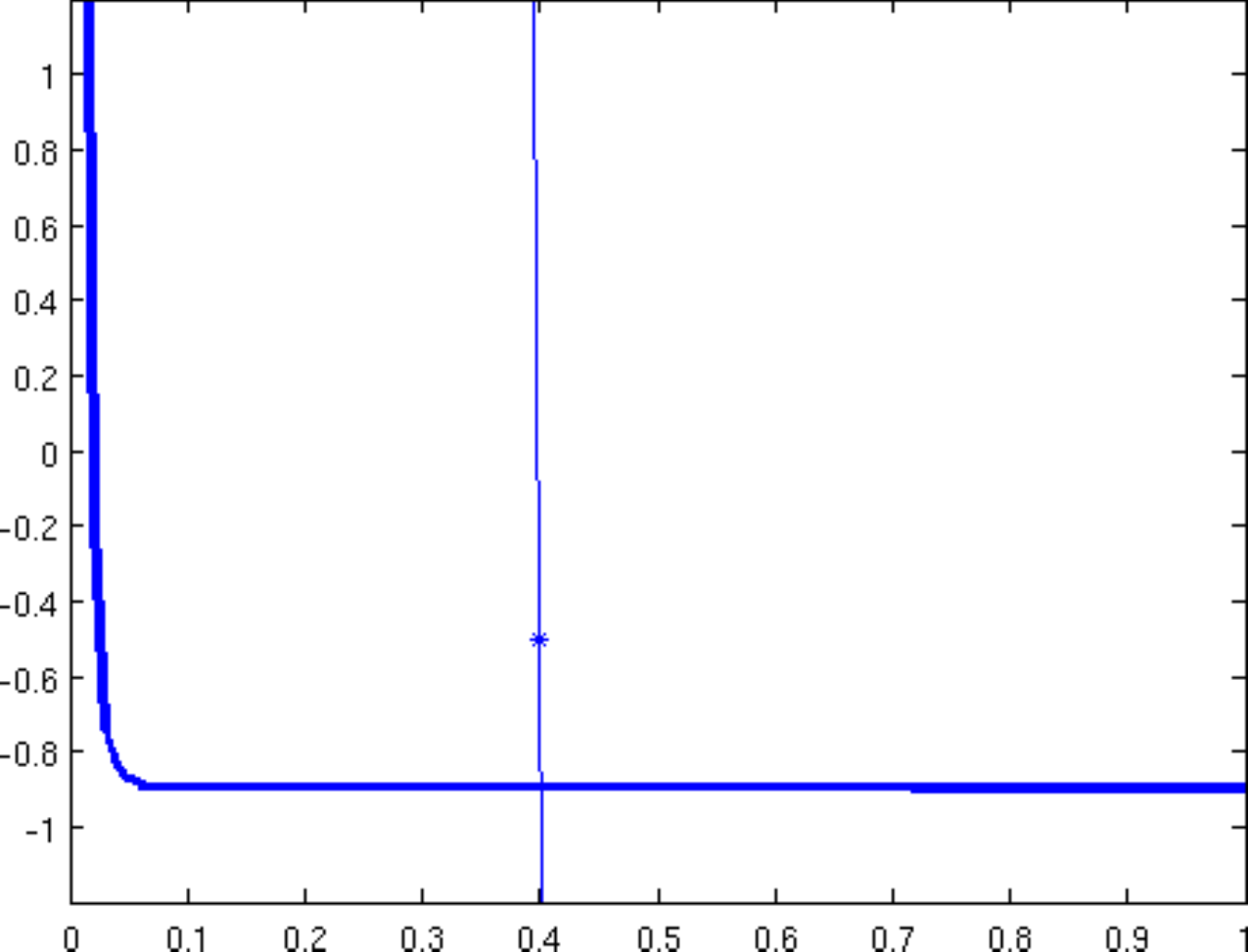}d
\caption{A plot of the nontrivial eigenvalues of $K$ as functions of $m$.  The thicker line represents the $(4,4)$ entry of $K$.  The thinner line is the same curve as plotted in Figure \ref{unstabplot} with the value at $m = 0.4$ emphasized.
}
\label{stabplot} 
\end{center}
\end{figure}

We note first that if we restrict to the subspace $\mathcal{A}$, then the eigenvalue of $K$ given by the $(4,4)$ entry corresponds to linear stability of the 2DF orbit.  This value stays in the interval $[-1,1]$ for $m \in (0.01 + \epsilon, 1]$, giving the linear stability result claimed earlier. \\

Based on the additional results of the numerical calculations, we conclude that the 4DF rhomboidal orbit is linearly unstable, hence unstable, for all $m$ except for a small interval about $m = 0.4$.  Additionally, there are three values of $m$ for which we establish only spectral stability, due to repeated eigenvalues on the unit circle.  Roberts' argument (see \cite{bibRoberts1}) demonstrates that each of the computed eigenvalues of $K$ in $[-1,1]$ correspond to the real part of a square root of an eigenvalue on the complex unit circle.  Accordingly, the value of $m = m_1$ where the two curves in Figure \ref{stabplot} cross is a point with duplicated eigenvalues, hence only spectral stability.  Similarly, the value $m = m_0$ where the curve in Figure \ref{unstabplot} crosses the $x$-axis gives spectral stability, as $(\pm i)^2 = 1$.  A third point occurs where $\cos(2\alpha(m)) = \cos(2\beta(m))$, where $\alpha(m)$ and $\beta(m)$ are the the two curves plotted in Figure \ref{stabplot}. \\

In order to obtain a more precise intervals of mass values for which the orbit is linearly stable (excluding the above-mentioned $m_i$), the initial conditions for mass values $m = 0.39$, $0.391$, ..., $0.409$, $0.41$ were obtained using the same trigonometric polynomial approximation/optimization as used in Section \ref{Numerics}.  The same linear stability calculations demonstrate that the 4DF rhomboidal orbit is linearly stable for $m$ contained in some subinterval of $(0.395, 0.401)$.  In other words, the orbit was found to be linearly unstable for $m = 0.395$ and $m = 0.401$, but linearly stable for all computed values in between, with the exclusion of the three critical mass values.  \\

In \cite{bibBounemoura}, Bounemoura shows that in an $n$-dimensional Hamiltonian system, orbits beginning close to a linearly stable invariant torus generically remain ``close'' to the invariant torus for a super-exponential amount of time, eventually drifting away.  The theory can also be applied to other cases, such as elliptic fixed points of maps.  This analysis leads us to believe that, even though we have linear stability for some open interval containing $m = 0.4$, the orbit is likely still be unstable.  Numerical perturbations off of the invariant subspace $\mathcal{A}$ give evidence that this is the case.\\


\newpage

\bibliographystyle{spmpsci}
\bibliography{rhombCombNotes2}

\end{document}